\newtheorem{theorem}{Theorem}[section]
\newtheorem{lemma}[theorem]{Lemma}
\newtheorem{proposition}[theorem]{Proposition}
\newtheorem{question}[theorem]{Question}
\theoremstyle{definition}
\theoremstyle{remark}
\newtheorem{remark}[theorem]{Remark}
\numberwithin{equation}{section}
\begin{document}

\begin{frontmatter}
\title{ Differential polynomial rings over locally nilpotent rings need not be Jacobson radical }

\author{Agata Smoktunowicz}
\address{Maxwell Institute for Mathematical Sciences, School of Mathematics, University of Edinburgh,\\ JCM Building, King’s Buildings, Mayfield Road
Edinburgh EH9 3JZ, Scotland, UK,\\   E-mail: A.Smoktunowicz@ed.ac.uk}

\author{Micha{\l} Ziembowski}
\address{Faculty of Mathematics and Information Science, Warsaw University of Technology,\\  00-662 Warsaw, Poland,\\ m.ziembowski@mini.pw.edu.pl}
\begin{abstract} 
We answer a question by Shestakov on the Jacobson radical in differential polynomial rings. We show that if $R$ is a locally nilpotent ring with a derivation $D$ then 
$R[X; D]$ need not be Jacobson radical. 
We also show that $J(R[X; D])\cap R$ is a nil ideal of $R$ in the case where $D$ is a locally nilpotent derivation and $R$ is an algebra over an uncountable field.
\end{abstract}
\begin{keyword}
Jacobson radical  \sep differential polynomial ring \sep locally nilpotent ring \sep locally nilpotent derivation.
\MSC 16N20 \sep 16S36 \sep 16W25.
\end{keyword}
\end{frontmatter}

\section{Introduction}

Let $D$ be a derivation on a ring $R$. 
We recall that the differential polynomial ring $R[X; D]$ consists of
all polynomials of the form $a_nX^n + \cdots + a_1X + a_0$, where $a_i\in R$
for $i = 0, 1, \dots, n$.
The ring $R[X; D]$ is considered with pointwise addition, and 
multiplication given by $X^iX^j = X^{i+j}$ and $Xa = aX + D(a)$, for all $a\in R$.

In a seminal paper \cite{Amitsur}, S. A. Amitsur proved that the Jacobson radical $J(R[X])$ of  the ring of all polynomials 
in a commutative indeterminate $X$ over R is the polynomial ring over the nil ideal $J(R[X])\cap R$. 
Then in \cite{Jordan}, D. A. Jordan showed that if $R$ is a right Noetherian ring with identity then  $J(R[X; D])\cap R$ is a nil ideal of $R$.
Also, in \cite{FKM} M. Ferrero et al.
revealed that $J(R[X; D]) = (J(R[X; D])\cap R)[X; D]$, and that in the case where $R$ is commutative $J(R[X; D]) \cap R$ is also nil. Papers \cite{FK} and more recently \cite{Tsai} and \cite{GB}  provide further interesting results.

 Overall it is an open question as to whether $J(R[X; D])\cap R$ is nil, however in this  paper we show that $J(R[X; D])\cap R$ is nil if $R$ is an algebra over an uncountable field and $D$ is a locally nilpotent derivation.

At the 2011 conference held in Coimbra entitled ''Non-Associative Algebras and Related Topics'', I. P. Shestakov  asked the following interesting question concerning the Jacobson radical of  differential polynomial rings. 
\begin{question}(Shestakov)
 Let $R$ be a locally nilpotent ring with a derivation $D$ and let $S=R[X; D]$ be the differential polynomial ring. Is the Jacobson radical of $S$ equal to $S$?
\end{question}
Although Shestakov's question is related to Lie algebras, in this paper we only concentrate on solving the above problem.
Using the aforementioned result by Ferrero et al. and \cite[Theorem 3.3]{FKM}, the answer to Shestakov's question is affirmative, if $R$ is a commutative ring over a field of characteristic zero. On the other hand, we show that in general the answer is in the negative. 
We leave as an interesting open problem the question of whether  $R[X; D]$ is Jacobson radical in the case where $R$ additionally satisfies a polynomial identity.

\section{On the Jacobson radical of the differential polynomial rings}

In this section we show that $J(R[X; D])\cap R$ is nil if $R$ is an algebra over an uncountable field and $D$ is a locally nilpotent derivation.

\begin{proposition}
 Let $R$ be an algebra over an uncountable field, $D$ be a locally nilpotent derivation on $R$, and $R[X; D]$ be the differential polynomial ring. Then $J(R[X; D])\cap R$ is nil.
\end{proposition}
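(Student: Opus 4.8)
The plan is to reduce everything to showing that a single element $r\in J(R[X;D])\cap R$ is nilpotent. Write $S:=R[X;D]$ and $I:=J(S)\cap R$. By the theorem of Ferrero, Kishimoto and Motose quoted above, $J(S)=I[X;D]$; and since $D(a)=Xa-aX\in J(S)\cap R=I$ for every $a\in I$, the set $I$ is a $D$-invariant ideal of $R$. I would first isolate two elementary facts. (i) $I\subseteq J(R)$: if $a\in I$ and $f\in R$ then $af\in J(S)$, so $1+af$ is invertible in $S^{1}$; writing the inverse as $\sum_i u_iX^i$ and using that left multiplication by the degree-zero element $af$ does not shift degrees, comparison of the $X^{0}$-coefficients gives $(1+af)u_{0}=1$, so $1+af$ is right invertible in $R^{1}$; as $f$ is arbitrary, $a\in J(R)$. (ii) For $a\in I$ the quasi-inverse of $a$ in $S$ already lies in $R$: writing it as $h=\sum_{i=0}^{n}h_iX^i$ and comparing the coefficients of $X^{n}$ in $h+a+ha=0$ gives $h_n(1+a)=0$, and since $a\in J(R)$ the element $1+a$ is a unit of $R^{1}$, so $h_n=0$ whenever $n\ge1$.

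Next I would exploit that $D$ is locally nilpotent to pass to a manageable subring. For the fixed $r$ the $D$-orbit $\{D^{j}(r):j\ge0\}$ is finite, so the non-unital $K$-subalgebra $R_0$ of $R$ it generates is finitely generated, hence of countable dimension over $K$; moreover $R_0$ is $D$-invariant and $R_0\subseteq I\subseteq J(R)$. Put $S_0:=R_0[X;D]\subseteq S$, so $S_0=R_0[X;D]\subseteq I[X;D]=J(S)$ and $\dim_K S_0\le\aleph_0$. Since $I[X;D]$ is a radical ring it equals its own Jacobson radical, so every element of $S_0$ is quasi-regular in $S$; by a version of the bookkeeping in (ii) — running the coefficient recursion from the top and repeatedly inverting units of the form $1+b$ with $b\in R_0\subseteq J(R)$ — one aims to verify that the quasi-inverse in $S$ of each element of $S_0$ again lies in $S_0$. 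Granting this, $S_0$ is a radical ring of countable dimension over the uncountable field $K$, hence nil by Amitsur's theorem on algebras over uncountable fields. In particular $rX\in S_0$ is nilpotent, say $(rX)^{N}=0$; since the leading coefficient of $(rX)^{N}$ is $r^{N}$, we conclude $r^{N}=0$.

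The step I expect to be the real obstacle is precisely the claim just granted: that quasi-inverses computed in $S$ of elements of $S_0$ stay inside $S_0$, equivalently that $S_0$ is itself a radical ring and not merely a subring of the radical ring $I[X;D]$. The difficulty is that $R$, and hence $R_0$, will in general have many zero divisors, so the naive geometric series for a quasi-inverse need not terminate and need not lie in $R_0$ — one cannot simply invert $1+b$, for $b\in R_0\cap J(R)$, inside $R_0$. This is exactly the point at which the local nilpotence of $D$ must be used in an essential way: the finite bound on the $D$-orders of the finitely many generators of $R_0$ controls how the operator $X$ entangles the coefficients of a quasi-inverse with the derivatives $D^{j}(r)$, and it is this boundedness that has to be parlayed into the statement that the entire quasi-inversion process can be confined to the countable-dimensional ring $S_0$. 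Once that is in place the rest is routine, and the hypothesis that $K$ be uncountable enters only through the final appeal to Amitsur's theorem.
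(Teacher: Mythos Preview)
Your preliminary observations are correct: $I=J(S)\cap R$ is a $D$-stable ideal of $R$, the argument for (ii) is valid, and together with (i) it gives $I\subseteq J(R)$ and in fact that $I$ is closed under taking quasi-inverses inside $R$. The difficulty is exactly where you locate it, and it is fatal to the strategy as written.

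The claim that quasi-inverses of elements of $S_0=R_0[X;D]$ remain in $S_0$ already fails at degree zero, and there the derivation plays no role whatsoever. Take $a\in R_0\subseteq I$. By (ii) its quasi-inverse $h$ lies in $R$ and satisfies $h(1+a)=-a$, so $h=-a(1+a)^{-1}$ with the inverse taken in $R^{1}$. There is no mechanism forcing $(1+a)^{-1}\in R_0^{1}$: the only information available is $a\in J(R)$, and subalgebras of radical rings are not radical in general. Concretely, specialise to the trivially locally nilpotent derivation $D=0$. Then $R_0=K\langle r\rangle$, and ``$R_0$ is radical'' means $r$ has a quasi-inverse that is a polynomial in $r$, which forces $r$ to satisfy a nontrivial one-variable polynomial identity and hence (being in $J(R)$) to be nilpotent. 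So in this case your step~6 is \emph{equivalent} to the conclusion you are trying to reach; the argument is circular. The appeal to local nilpotence of $D$ cannot rescue this, because the obstruction lives entirely inside $R$ and involves no $X$'s at all. The variant of closing $R_0$ under quasi-inverses does not help either: $R_0$ has uncountably many elements when $K$ is uncountable, so adjoining all their quasi-inverses need not preserve countable dimension.

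The paper's argument avoids the subring trap completely. It builds a carefully restricted power-series overring $\widehat S\supseteq R[X;D]$ in which the geometric series $\sum_{i\ge1}(cX^{p})^{i}$ genuinely converges once $p$ exceeds the $D$-nilpotence index of $c$ (this is precisely where local nilpotence of $D$ is used). Uniqueness of inverses then forces this series to equal the polynomial quasi-inverse coming from $c\in J(S)$. Replacing $c$ by $\alpha c$ and using that $K$ is uncountable, a pigeonhole on degrees gives infinitely many $\alpha$ for which the inverse has degree below a fixed bound $n$; reading off the $X^{np}$-coefficient and applying a Vandermonde argument yields $c^{n}=0$. If you want to salvage your reduction-to-countable-dimension idea, the missing ingredient is an \emph{a priori} reason, independent of nilpotence, for quasi-inverses to lie in a fixed countable-dimensional subalgebra; the paper sidesteps this by comparing the polynomial inverse with an explicit formal inverse instead.
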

\begin{proof}
Let $r\in R$, we denote $s(r)=n$ if $D^{n}(r)=0$ and $D^{n-1}(r)\neq 0$. 
Define  $S$ to be the set of all series  $\sum_{i=0}^{\infty}c_{i}X^{i}$ with the property that for every natural number $\alpha $ there exist $n_{\alpha }$, such that for all $i>n_{\alpha }$ we have $s(c_{i})+\alpha <i$.

It is easy to prove that the set $S$ with  addition and multiplication the same as in $R[X; D]$ is a ring and $R[X; D]$ is a subring of $S$.

Let $c\in R\cap J(R[X; D])$, and let $p>s(c)$. The element $1-cX^{p}$ has inverse in $(R[X; D])^{1}$  (since $c\in J(R[X; D])$) and on the other hand it has inverse in  $S^{1}$ because $f=\sum_{i=1}^{\infty }(cX^{p})^{i}$ is in $S$, and $(1-cX^{p})(1+f)=(1+f)(1-cX^{p})=1$. It follows that these two inverses are equal, so $f\in R[X; D]$, hence $f=\sum_{i=1}^{n-1}z_{i}X^{i}$, for some $n$. 
 
Since the base field is uncountable,  it follows that for infinitely many $\alpha \in K$, inverse of element $1-\alpha cX^{p}$ is equal to $1+f_{\alpha }$ where
$f_{\alpha }=\sum_{i=1}^{\infty }(\alpha cX^{p})^{i}$, is in $R+RX+\ldots +RX^{n-1}$. 
It follows that the coefficient at $X^{np}$ is zero for infinitely many $\alpha $.

For $a\in R[X; D]$ let $(a)_k$ denote the coefficient at $X^{k}$ in $a$.

Observe  that  the coefficient at $X^{np}$ in $f_{\alpha }$ is equal to 
\[((\alpha cx^{p})^{n})_{np}+((\alpha cx^{p})^{n+1})_{np}+ \ldots +((\alpha cx^{p})^{m})_{np}\]
 for some $m$ (such $m$ exists because $f_{\alpha }$ is in $S$).  We can take $\alpha $ outside the bracket to get 
\begin{equation}\label{locallyder}
\alpha^{n} ((cx^{p})^{n})_{np}+\alpha^{n+1}(( cx^{p})^{n+1})_{np}+\ldots +\alpha^{m}(( cx^{p})^{m})_{np}=0.
\end{equation}
Next time using the fact that $R$ is an algebra over an uncountable field, we can see that  (\ref{locallyder}) is true for infinitely many $\alpha$, hence by using the Vandermonde matrix argument,  we get that 
$((cx^{p})^{n})_{np}=(( cx^{p})^{n+1})_{np}=\ldots = ((cx^{p})^{m})_{np}=0$, it follows that $c^{n}=((cx^{p})^{n})_{np}=0$, as required.
\end{proof}

\section{Shestakov's question}

In this section we solve Shestakov's question. We first introduce some notation.

Let $K$ be a field, and let $A$ be
a free algebra over $K$ with a countable set of free generators $\mathcal{X} = \{x_0, x_1, x_2, \ldots\}$. Obviously, the monomials of the form $x_{i_1}x_{i_2}\dots x_{i_n}$
where $i_1, \dots, i_n$ are 
non-negative integers, form a $K$-basis of $A$.
By $A^1$ we denote 
the algebra obtained from $A$ by the adjunction of a unity.
If we consider  a monomial $s = x_{i_1}x_{i_2}\dots x_{i_n}$ then $l(s)$ stands for 
length of $s$, $deg(s) = i_1 + \ldots + i_n$, and for  $q = 1, \dots, n$ by $s[q]$ we denote the element $x_{i_q}$.
If an element $a\in A$ is a sum  of monomials of the same degree multiplied by coefficients, then by $deg(a)$ we mean the number expressing the common degree of the monomials.
For a positive integer $n$ and a subset $S$ of $A$ by $S(n)$ we denote the set of all elements of $S$
which are sums of monomials of length equal to $n$ multiplied by coefficients. Finally, by $\mathcal{M}$ we denote the set of all monomials of $A$.

Consider the  $K$-linear map $D: A \to A$ such that for any $i$, $D(x_i) = x_{i+1}$, and for $a, b\in A,$ $$D(ab)=D(a)b + aD(b),\, D(a+b)=D(a) + D(b).$$ 
This is obvious that $D$ is a derivation on $A$.

For $k > 0$ we set  $\mathcal{X}_k=\{x_0, x_1, \dots, x_{k-1}\}$, and recursively we define the following subsets of $A$
\begin{equation}
W(k, n, 0) = \{x_{i_1}x_{i_2}\cdot\,\ldots\,\cdot x_{i_n}: x_{i_j}\in \mathcal{X}_k \text{ for all } j\},
\end{equation}
$$\text{ if } W(k, n, l) \text{ is defined, } W(k, n, l+1) = \{D(x): x\in W(k, n, l)\},$$
and finally 
\begin{equation}
W(k, n):= \bigcup_{t\geq 0}W(k, n, t).
\end{equation}
Obviously $W(k, n)$ is closed under derivation $D$.

We define for any positive integer $k$ the ideal $I_k$ of $A$ generated 
by $W(k, 2\cdot 100^{k^2})$, and the ideal $I = \sum_{k>0}I_k$ of $A$.

For any positive  integer $k$ we define the linear space 
\begin{equation}
W_k = \sum_{m = 0}^{\infty}A(m\cdot 100^{k^2})W(k, 100^{k^2})A^1.
\end{equation}

In the next part of our construction we would like to prove the following.

\begin{lemma}\label{1}
For any $k\geq 1$ we have $I_k\subseteq W_k$.
\end{lemma}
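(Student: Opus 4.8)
Throughout write $N:=100^{k^{2}}$, so that $I_{k}$ is the ideal of $A$ generated by $W(k,2N)$ and $W_{k}=\sum_{m\ge 0}A(mN)\,W(k,N)\,A^{1}$. My plan is to record two cheap stability properties of $W_{k}$, reduce the inclusion to a statement about the generators $D^{t}(s)$ of $I_{k}$, and then run an induction on the number $t$ of derivatives. For the stability properties: since $D(x_{i})=x_{i+1}$, the derivation $D$ preserves the length of every monomial, so $D(A(j))\subseteq A(j)$ for each $j$; by construction $D(W(k,N))\subseteq W(k,N)$; and $D(A^{1})\subseteq A^{1}$ (with $D(1)=0$). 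Applying the Leibniz rule to a spanning element $a\,w\,b$ of $W_{k}$ (with $a\in A(mN)$, $w\in W(k,N)$, $b\in A^{1}$) gives $D(awb)=D(a)wb+aD(w)b+awD(b)$, each summand again of the shape $A(mN)\,W(k,N)\,A^{1}$; hence $D(W_{k})\subseteq W_{k}$, and so $D^{t}(W_{k})\subseteq W_{k}$ for all $t\ge 0$. Next I recall that $W(k,2N)=\{D^{t}(s):t\ge 0,\ s\in W(k,2N,0)\}$, that $W(k,2N,0)$ is exactly the set of monomials of length $2N$ in $x_{0},\dots ,x_{k-1}$, and that $I_{k}=A^{1}\,W(k,2N)\,A^{1}$ is the $K$-span of the elements $p\,D^{t}(s)\,q$ with $p,q$ monomials of $A^{1}$ (possibly $1$), $t\ge 0$ and $s\in W(k,2N,0)$. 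Since $W_{k}$ is a $K$-subspace, it therefore suffices to prove $p\,D^{t}(s)\,q\in W_{k}$ for all such $p,q,s,t$; call this statement $P(t)$.

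The base case $P(0)$ is a short word-combinatorial argument. Let $r\in\{0,1,\dots ,N-1\}$ be the residue of $l(p)$ modulo $N$, and read $psq$ as a word. The $2N$ letters of $s$ occupy a contiguous block, and because this block has length $2N$ it contains an $N$-letter sub-block $B:=s[N-r+1]\,s[N-r+2]\cdots s[2N-r]$ such that the part of $psq$ lying to the left of $B$, namely $p\cdot s[1]\cdots s[N-r]$, has length $l(p)+N-r$ — a positive multiple of $N$, say $jN$. All letters of $B$ are letters of $s$, hence lie in $\mathcal{X}_{k}$, so $B\in W(k,N,0)\subseteq W(k,N)$. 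Writing $psq$ as the product of this prefix (which lies in $A(jN)$), of $B$, and of the remaining suffix (which lies in $A^{1}$) yields $psq\in W_{k}$.

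For the inductive step, assume $P(0),\dots ,P(t-1)$ with $t\ge 1$. Expanding $D^{t}(psq)$ by the multinomial Leibniz rule and isolating the single term that puts all $t$ derivatives on $s$,
\[
p\,D^{t}(s)\,q=D^{t}(psq)-\!\!\sum_{\substack{i+b+c=t\\(i,b,c)\neq(0,t,0)}}\!\frac{t!}{i!\,b!\,c!}\;D^{i}(p)\,D^{b}(s)\,D^{c}(q).
\]
The term $D^{t}(psq)$ lies in $W_{k}$ by $P(0)$ together with the $D$-stability of $W_{k}$ established above. In every term of the sum one has $b\le t-1$; after expanding $D^{i}(p)$ and $D^{c}(q)$ as $K$-linear combinations of monomials of $A$, the inductive hypothesis $P(b)$ applies to each resulting summand $\mu\,D^{b}(s)\,\nu$, so the whole sum lies in $W_{k}$. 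Hence $p\,D^{t}(s)\,q\in W_{k}$, which proves $P(t)$ and, with the first paragraph, the lemma.

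I expect the combinatorial step $P(0)$ to be the only real content: it is exactly there that the choice of length $2\cdot 100^{k^{2}}$ for the generators of $I_{k}$ matters, a window of $2N$ consecutive letters being always long enough to contain an $N$-letter sub-block having a multiple of $N$ letters before it, no matter what $l(p)$ is. Everything else is the bookkeeping above together with the Leibniz trick of the last paragraph, which trades one derivative for the already-settled case $P(0)$ while dumping the ``all derivatives on the whole product'' term back into $W_{k}$ via $D$-stability.
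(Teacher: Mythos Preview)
Your proof is correct. The key combinatorial observation---that a word of length $2N$ always contains an $N$-letter sub-block whose left boundary lands on a multiple of $N$ regardless of how many letters precede the word---is exactly the content the paper exploits, and your handling of the edge cases (in particular $p=1$, where $r=0$ and the prefix still has positive length $N$) is fine.

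The organization, however, differs from the paper's. The paper first observes that $W_{k}$ is a right ideal, so it suffices to show $w\in W_{k}$ and $vw\in W_{k}$ for a monomial $v$ and $w\in W(k,2N)$. It then writes $w=D^{l}(u)$ with $u\in W(k,2N,0)$, factors $u=u_{1}u_{2}u_{3}$ (choosing $l(u_{1})$ so that $l(vu_{1})$ is a multiple of $N$ and $l(u_{2})=N$), and applies Leibniz \emph{to this factorization}: $D^{l}(u)=\sum\alpha_{(l_{1},l_{2},l_{3})}D^{l_{1}}(u_{1})D^{l_{2}}(u_{2})D^{l_{3}}(u_{3})$. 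Each summand $vD^{l_{1}}(u_{1})\cdot D^{l_{2}}(u_{2})\cdot D^{l_{3}}(u_{3})$ visibly lies in $A((p{+}1)N)\,W(k,N)\,A^{1}$, so all $l$ are handled in one stroke without induction. Your route instead first records $D$-stability of $W_{k}$ as a separate lemma, proves only the case $t=0$ combinatorially, and then uses the Leibniz isolation trick $p\,D^{t}(s)\,q=D^{t}(psq)-\sum\cdots$ together with induction on $t$. What you gain is a clean standalone statement $D(W_{k})\subseteq W_{k}$; what the paper gains is avoiding the induction and the subtraction entirely by applying Leibniz inside the generator rather than across $p,s,q$. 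Both arguments are short and rest on the same combinatorial point.
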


\begin{proof}
Recall that the ideal $I_k$ is generated by $W(k, 2\cdot 100^{k^2})$.
As $W_k$ is a linear space and right ideal of $A$  
to prove the lemma it is enough to show that for any monomial 
$v\in A$ and  $w\in W(k, 2\cdot 100^{k^2})$ we have 
$$w\in W_k \text{ and } vw\in W_k.$$

Firstly, we want to show that 
for any monomial 
$v\in A$ and  $w\in W(k, 2\cdot 100^{k^2})$,
$vw\in W_k$. 

Let $l(v) = p\cdot 100^{k^2} + q$
with $q < 100^{k^2}$. Then $v = v_1v_2$ for some  $v_1, v_2\in\mathcal{M}$ such that
$l(v_1) = p\cdot 100^{k^2}$ and $l(v_2) = q.$

As $W(k, 2\cdot 100^{k^2}) = \bigcup_{t\geq 0} W(k, 2\cdot 100^{k^2}, t)$ there exists $l$ such that $w\in W(k, 2\cdot 100^{k^2}, l)$. Thus
there exists $u\in  W(k, 2\cdot 100^{k^2}, 0)$ such that
$w = D^l(u)$. 
Moreover, $u = u_1u_2u_3$ with $$u_1\in W(k, 100^{k^2} - q, 0), 
u_2\in W(k, 100^{k^2}, 0), \text{ and } u_3\in W(k, q, 0).$$
Thus for some positive integers $\alpha_{(l_1, l_2, l_3)}$
$$D^l(u) = D^l(u_1u_2u_3) = \sum_{l_1 + l_2 + l_3 = l}\alpha_{(l_1, l_2, l_3)} D^{l_1}(u_1)D^{l_2}(u_2)D^{l_3}(u_3).$$
Now $$vw = vD^l(u) = \sum_{l_1 + l_2 + l_3 = n}\alpha_{(l_1, l_2, l_3)} vD^{l_1}(u_1)D^{l_2}(u_2)D^{l_3}(u_3).$$
Since for any $l_1$, $vD^{l_1}(u_1)$ is a sum of monomials of length $p\cdot 100^{k^2}$ and $D^{l_2}(u_2)\in W(k, 100^{k^2})$ we deduce that $vw\in W_k$.

In a similar way we can show that if $w\in W(k, 2\cdot 100^{k^2})$ then $w\in W_k$.
\end{proof}

Now, we come to the very crucial point of our construction. Namely,  for any positive integer $k$ we 
fix numbers 
$$c_{1} = 100^{(k-1)^2}, c_{2}=3\cdot 100^{(k-1)^2}, \ldots, c_{k+1} = 3^k\cdot 100^{(k-1)^2}$$
and define the set $Z_k$
which consists of all elements $a$ of $A$ which satisfy one of the following conditions:
\begin{enumerate}
\item $a = \kappa s$ where $\kappa\in K$, and $s\in\mathcal{M}$ is  such that  $l(s) = 100^{k^2} - 1$, and there exist non-negative integers $p < q \leq k$ such that 
$$s[3^p\cdot 100^{(k-1)^2}] = s[3^q\cdot 100^{(k-1)^2}].$$

\item $a = \kappa(s_1 + s_2)$ where $\kappa\in K$, $s_1, s_2\in\mathcal{M}(100^{k^2} - 1)$, and there exist non-negative integers
$p < q \leq n$ and $l_1 > l_2 > 0$ such that 
$$s_1[3^p\cdot 100^{(k-1)^2}] = x_{l_1},\, s_1[3^q\cdot 100^{(k-1)^2}] = x_{l_2}$$
$$s_2[3^p\cdot 100^{(k-1)^2}] = x_{l_2},\, s_2[3^q\cdot 100^{(k-1)^2}] = x_{l_1},$$
and $s_1[j] = s_2[j]$ for any $j\neq 3^p\cdot 100^{(k-1)^2}, 3^q\cdot 100^{(k-1)^2}$.
\end{enumerate}

We leave to the reader  verification of the following.

\begin{lemma}\label{lreader}
For any $k> 0$ and $a\in Z_k$, $D(a)$ is a sum of elements of $Z_k$.
\end{lemma}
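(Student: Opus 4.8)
The plan is to reduce everything to the Leibniz formula for $D$ and then run a short case analysis according to which of the two forms $a$ has in the definition of $Z_k$. First I would record the explicit action of $D$ on a monomial: for $s = x_{i_1}\cdots x_{i_m}$,
\[ D(s) = \sum_{j=1}^{m} s^{(j)}, \]
where $s^{(j)}$ denotes the monomial obtained from $s$ by replacing its $j$-th letter $x_{i_j}$ with $x_{i_j+1}$ and leaving the other letters untouched. In particular $l(s^{(j)}) = l(s)$, so $D$ preserves the length condition $l(\,\cdot\,) = 100^{k^2} - 1$ built into both clauses defining $Z_k$, and the whole verification reduces to tracking the letters sitting at the distinguished positions $c_i = 3^{i-1}\cdot 100^{(k-1)^2}$, $1 \le i \le k+1$.

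Suppose first that $a = \kappa s$ is of the first form, with $s[u] = s[v]$ for $u = 3^{p}\cdot 100^{(k-1)^2}$, $v = 3^{q}\cdot 100^{(k-1)^2}$ and $0 \le p < q \le k$. I would split $D(a) = \sum_j \kappa s^{(j)}$ into the terms with $j \notin \{u,v\}$ and the two terms with $j \in \{u,v\}$. For $j \notin \{u,v\}$ we still have $s^{(j)}[u] = s[u] = s[v] = s^{(j)}[v]$, so $\kappa s^{(j)}$ is again of the first form with the same pair $p < q$. For the two remaining terms, write $s[u] = s[v] = x_{l}$; then $s^{(u)}$ and $s^{(v)}$ agree in every letter except that $x_{l}$ has been raised to $x_{l+1}$ in position $u$ (for $s^{(u)}$) or in position $v$ (for $s^{(v)}$), so $\kappa(s^{(u)} + s^{(v)})$ is exactly of the second form, with the pair $\{x_{l+1}, x_{l}\}$ at the positions $u, v$. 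Hence $D(a)$ is a sum of elements of $Z_k$.

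Now suppose $a = \kappa(s_1 + s_2)$ is of the second form, with $s_1, s_2$ carrying the letters $x_{l_1}, x_{l_2}$ ($l_1 > l_2$) in swapped order at two distinguished positions $u, v$ and agreeing elsewhere. Again split $D(a) = \sum_j \kappa(s_1^{(j)} + s_2^{(j)})$. For $j \notin \{u,v\}$ the pair $s_1^{(j)}, s_2^{(j)}$ is still swapped at $u, v$ in the same two letters, so $\kappa(s_1^{(j)} + s_2^{(j)})$ is of the second form. The four exceptional monomials $s_1^{(u)}, s_1^{(v)}, s_2^{(u)}, s_2^{(v)}$ I would recombine as
\[ \kappa\bigl(s_1^{(u)} + s_2^{(v)}\bigr) + \kappa\bigl(s_1^{(v)} + s_2^{(u)}\bigr). \]
The first summand is swapped at $u, v$ in the letters $\{x_{l_1+1}, x_{l_2}\}$ and so is of the second form; in the second summand, if $l_1 > l_2 + 1$ it is swapped at $u, v$ in the letters $\{x_{l_1}, x_{l_2+1}\}$ and is again of the second form, whereas if $l_1 = l_2 + 1$ the two monomials $s_1^{(v)}$ and $s_2^{(u)}$ coincide and $2\kappa\, s_1^{(v)}$ carries equal letters ($x_{l_1}$) at the distinguished positions $u, v$, hence is of the first form. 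In every case $D(a)$ is a sum of elements of $Z_k$.

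I expect the one spot needing genuine care to be the regrouping of the four exceptional monomials in the second case — keeping straight which $s_i^{(j)}$ pairs with which — together with checking that the index inequalities survive (the condition $l_1 > l_2$, and the positivity of the smaller index, read as $l_1 > l_2 \ge 0$ in the degenerate situation where a letter $x_0$ occurs at a distinguished position). Once that pairing is written down, everything else is immediate from the Leibniz rule and the fact that $D$ leaves the length of a monomial unchanged.
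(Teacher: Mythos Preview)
The paper gives no proof of this lemma; it is explicitly left to the reader, so there is nothing to compare your argument against. Your approach --- Leibniz expansion followed by a short case split on whether the differentiated position is distinguished, together with the regrouping of the four exceptional terms in the second case --- is exactly the verification one expects, and your pairings are correct.

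Your closing caveat is well placed and is in fact the only substantive issue. The paper's clause~(2) literally requires $l_1 > l_2 > 0$. With that strict inequality the lemma is false: take $s$ of type~(1) with $s[u]=s[v]=x_0$ and all other distinguished letters pairwise distinct; then $s^{(u)}+s^{(v)}$ carries the pair $\{x_1,x_0\}$ at positions $u,v$, and no combination of elements of $Z_k$ can produce it, since every type~(2) generator has both swapped indices $\ge 1$ and every type~(1) generator has equal letters at some distinguished pair. The argument in Lemma~\ref{important2}, where $Z_k$ is actually used, works verbatim with $l_2\ge 0$, so the strict inequality is almost certainly a typo; your proof is the correct one for the intended statement, and you were right to flag the discrepancy rather than gloss over it.
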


For any positive  integer $k$ we define the linear space 
\begin{equation}\label{sk}
B_k = \sum_{m = 0}^{\infty}A(m\cdot 100^{k^2})Z_kA^1.
\end{equation}

\begin{remark}\label{remark}
Using  Lemma \ref{lreader} it is easy to see that for any $k$ the linear space  $B_k$ is closed under derivation $D$ (i.e. $D(B_k)\subseteq B_k$). 
Moreover, it is obvious that $B_k$ is right ideal of $A$, and $A(m\cdot 100^{k^2})B_k \subseteq B_k$ for any $m\geq 0$.
\end{remark}

\begin{lemma}\label{ooo}
For any $k\geq 1$ we have $I_k\subseteq B_k$.
\end{lemma}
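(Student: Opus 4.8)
The plan is to imitate the proof of Lemma \ref{1}, using that $B_k$ is, by Remark \ref{remark}, a linear space, a right ideal of $A$, closed under the derivation $D$, and closed under left multiplication by $A(m\cdot 100^{k^2})$ for each $m\ge 0$. Since $I_k$ is the two-sided ideal of $A$ generated by $W(k,2\cdot 100^{k^2})$, and $B_k$ is a linear space and a right ideal, it will be enough to prove that $w\in B_k$ and $vw\in B_k$ for every monomial $v\in A$ and every $w\in W(k,2\cdot 100^{k^2})$, the first statement being the degenerate case $v=1$ of the computation used for the second.

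The first step I would record is a pigeonhole fact: every monomial of $W(k,100^{k^2}-1,0)$ lies in $Z_k$, and hence $W(k,100^{k^2}-1)\subseteq B_k$. Here one checks that the $k+1$ pairwise distinct positions $3^{0}\cdot 100^{(k-1)^2},\,3^{1}\cdot 100^{(k-1)^2},\ldots,3^{k}\cdot 100^{(k-1)^2}$ are all at most $100^{k^2}-1$, which follows from $3^k<100^{2k-1}$ together with the identity $(k-1)^2+(2k-1)=k^2$; a length-$(100^{k^2}-1)$ word over the $k$-letter alphabet $\mathcal{X}_k$ must then repeat a letter among these positions, which is exactly condition (1) defining $Z_k$. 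Since $Z_k\subseteq B_k$ and $B_k$ is closed under $D$, applying powers of $D$ gives $W(k,100^{k^2}-1)=\bigcup_{t\ge 0}D^t\bigl(W(k,100^{k^2}-1,0)\bigr)\subseteq B_k$.

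Next, given $v$ and $w$ as above, I would write $l(v)=p\cdot 100^{k^2}+q$ with $0\le q<100^{k^2}$ and $w=D^{l}(u)$ with $u\in W(k,2\cdot 100^{k^2},0)$, and split $u$ into consecutive blocks $u=u_1u_2u_3$ with $u_1\in W(k,100^{k^2}-q,0)$, $u_2\in W(k,100^{k^2}-1,0)$, $u_3\in W(k,q+1,0)$ (the three lengths summing to $2\cdot 100^{k^2}$). Expanding by the Leibniz rule,
\[
vw=\sum_{l_1+l_2+l_3=l}\binom{l}{l_1,l_2,l_3}\,\bigl(vD^{l_1}(u_1)\bigr)\,D^{l_2}(u_2)\,D^{l_3}(u_3),
\]
and in each summand $vD^{l_1}(u_1)$ is a sum of monomials of length $l(v)+l(u_1)=(p+1)100^{k^2}$, so it lies in $A\bigl((p+1)100^{k^2}\bigr)$; moreover $D^{l_2}(u_2)\in W(k,100^{k^2}-1)\subseteq B_k$ by the first step, and $D^{l_3}(u_3)\in A^1$. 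Using that $B_k$ is a right ideal and is absorbed on the left by $A\bigl((p+1)100^{k^2}\bigr)$, each summand lies in $B_k$, hence so does $vw$.

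I expect the only real subtlety is the reason this cannot be done "monomial by monomial": once one applies $D$ to the middle block $u_2$, the letters in the two distinguished positions of $u_2$ need no longer coincide, so the individual monomials of $D^{l_2}(u_2)$ are in general not of the form described in condition (1) — they are precisely the "swapped" monomials that condition (2) of $Z_k$ was designed to accommodate in pairs. The argument sidesteps this by never unpacking $D^{l_2}(u_2)$: it suffices to know (Lemma \ref{lreader} and Remark \ref{remark}) that $B_k$ is closed under $D$, together with the numerology $3^k\cdot 100^{(k-1)^2}<100^{k^2}$ that makes the pigeonhole on the middle block of length $100^{k^2}-1$ work. Everything else is the same bookkeeping as in the proof of Lemma \ref{1}.
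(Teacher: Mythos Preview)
Your proof is correct and follows essentially the same strategy as the paper's. The only difference is organizational: the paper invokes Lemma~\ref{1} to reduce to showing that every element of $W(k,100^{k^2})$ is a sum of elements of $Z_kA(1)$ (peeling off the last letter of a length-$100^{k^2}$ monomial so that the first $100^{k^2}-1$ letters land in $Z_k$ by pigeonhole, then using Lemma~\ref{lreader}), whereas you inline the Lemma~\ref{1} computation and take the middle block to have length $100^{k^2}-1$ so it lands directly in $Z_k$; the pigeonhole argument and the appeal to $D$-closure of $B_k$ are identical in both.
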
  

\begin{proof}
Let $k$ be a positive integer.

Using Lemma \ref{1} it is enough to show that each element  $w\in W(k, 100^{k^2})$ is a sum of elements of $Z_kA(1)$.

Consider an element $u\in W(k, 100^{k^2}, 0)$.
By construction $l(u) = 100^{k^2}$ and $u[j] \in  \mathcal{X}_k = \{x_0, \dots, x_{k-1}\}$
for any $j = 1,\dots, 100^{k^2}$. As $\lvert \mathcal{X}_k\rvert = k$ considering 
the sequence of $k+1$ elements
$$u[3^0\cdot 100^{(k-1)^2}], \ldots, u[3^k\cdot 100^{(k-1)^2}]$$
we deduce that there exist $0 \leq p < q \leq k$ such that $u[3^p\cdot 100^{(k-1)^2}] = u[3^q\cdot 100^{(k-1)^2}]$. Thus $u\in Z_k$ and we have proved that 
$W(k, 100^{k^2}, 0)\subseteq Z_kA(1)$. 

Fix a positive integer $l$ and consider
$w\in W(k, 100^{k^2}, l)$. Then there exists a monomial $u\in W(k, 100^{k^2}, 0)\subseteq Z_kA(1)$ such that $w = D^l(u)$. Thus $w$
is a linear combination of elements of $Z_kA(1)$ by Lemma \ref{lreader}.
\end{proof}

\begin{lemma}\label{important}
Let $a_1, a_2, \ldots, a_n$ be elements of $A$ such that for any $i$, $a_i\in A(100^{k^2}p_i - 1)$ for some $p_i>0$.
 Furthermore, assume that for any $i$,  $a _i\notin B_1 + \ldots + B_{k}$.  Then for any non-negative integers $m_1, m_2, \ldots, m_{n}$, 
$a_1x_{m_1}a_2x_{m_2}\cdot\ldots\cdot x_{m_{n-1}}a_nx_{m_n}\notin B_1 + \ldots + B_{k}$.
\end{lemma}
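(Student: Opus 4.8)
The plan is to analyze what it means for a product $a_1 x_{m_1} a_2 x_{m_2}\cdots x_{m_{n-1}} a_n x_{m_n}$ to lie in $B_1+\cdots+B_k$, and to derive a contradiction with the hypothesis that no individual $a_i$ lies in $B_1+\cdots+B_k$. The key structural fact I would extract from the definition \eqref{sk} is that $B_j = \sum_{m\ge 0} A(m\cdot 100^{j^2}) Z_j A^1$, so every element of $B_1+\cdots+B_k$ is a sum of monomials-with-coefficients each of which, for some $j\le k$, factors as (something of length a multiple of $100^{j^2}$)$\cdot$(an element of $Z_j$, which involves one or two monomials of length $100^{j^2}-1$)$\cdot$(a right tail). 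The crux is a \emph{position-counting argument}: the "$Z_j$-block" of length $100^{j^2}-1$ must sit starting at a position that is $\equiv 1 \pmod{100^{j^2}}$ inside the whole word. First I would record the lengths: $l(a_i x_{m_i}) = 100^{k^2} p_i$, so the word $a_1 x_{m_1}\cdots a_n x_{m_n}$ is a sum of monomials all of the common length $L = 100^{k^2}(p_1+\cdots+p_n)$, and the initial segment ending just before $a_{i+1}$ has length a multiple of $100^{k^2}$, hence a multiple of $100^{j^2}$ for every $j\le k$.

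Next I would argue by contradiction. Suppose the product lies in $B_1+\cdots+B_k$; expand everything into monomials. Pick a monomial $M$ actually occurring (with nonzero coefficient) in the product; it is then "covered" by the $B_1+\cdots+B_k$ membership, meaning $M$ (possibly together with one partner monomial, as in case (2) of the definition of $Z_j$) participates in an element of some $B_j$. By the position-counting observation, the $Z_j$-block inside $M$ occupies positions $[\,r\cdot 100^{j^2}+1,\ (r+1)\cdot 100^{j^2}-1\,]$ for some $r\ge 0$; since $100^{j^2}$ divides $100^{k^2}$ which divides each $l(a_i x_{m_i})$, this block lies entirely \emph{within the portion of $M$ coming from a single factor $a_i$} — it cannot straddle one of the separating letters $x_{m_i}$, nor can it span the boundary between $a_i$ and $x_{m_i}$, because those boundaries occur only at positions $\equiv 0$ or specifically at the last letter, and a block of length $100^{j^2}-1$ starting at position $\equiv 1 \pmod{100^{j^2}}$ ends at position $\equiv 0\pmod{100^{j^2}}$, i.e. exactly one letter short of the next boundary, so the single trailing letter $x_{m_i}$ is the "$A^1$-tail" and the block sits inside $a_i$. (I would treat case (2) of $Z_j$ the same way, since the swapped pair of monomials differ only in two letters at the \emph{same} two positions, so both copies are governed by the same block location.)

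Having localized the $Z_j$-block inside the $a_i$-portion, I would then reassemble: fixing all the other factors and the outer letters, the set of monomials of the full product that agree with $M$ outside the $a_i$-portion is in bijection (degree-preservingly, as a $K$-linear correspondence) with the monomials of $a_i$ itself; under this bijection, the $B_j$-membership condition on those monomials of the product translates exactly into the statement that the corresponding combination of monomials of $a_i$ lies in $A(m\cdot 100^{j^2}) Z_j A^1 \subseteq B_j$. Running this over all monomials $M$ and all the relevant $j\le k$, one concludes $a_i \in B_1+\cdots+B_k$, contradicting the hypothesis. The cleanest way to organize the induction is on $n$: peel off $a_n x_{m_n}$, show that any $B_1+\cdots+B_k$-membership of the whole word forces either $a_n \in B_1+\cdots+B_k$ directly or a $B_1+\cdots+B_k$-membership of the shorter word $a_1 x_{m_1}\cdots a_{n-1} x_{m_{n-1}}$ multiplied on the right by $x_{m_{n-1}}$ — using Remark~\ref{remark} that each $B_j$ is a right ideal and absorbs left multiplication by $A(m\cdot 100^{j^2})$ — and invoke the inductive hypothesis.

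The main obstacle I anticipate is the bookkeeping in the position-counting step: making rigorous that a $Z_j$-block of length exactly $100^{j^2}-1$, forced to begin at a position congruent to $1$ modulo $100^{j^2}$, can neither cross a factor boundary nor "waste" its length on a separator letter $x_{m_i}$ — and simultaneously handling the two-monomial case (2) of $Z_j$, where one must check the partner monomial of $M$ is \emph{also} a monomial of the full product with the two differing positions lying in the same $a_i$-portion, so that the correspondence with monomials of $a_i$ is genuinely well-defined on pairs. The length arithmetic $l(a_i x_{m_i}) = 100^{k^2} p_i$ together with the divisibility $100^{j^2}\mid 100^{k^2}$ for $j\le k$ is exactly what makes all these boundaries line up, so once that is set up carefully the rest is routine.
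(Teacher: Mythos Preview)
Your position-counting localization is correct and is exactly the content behind the paper's claim that every element of $B\cap A(\xi)$ is a sum of terms $ucv$ with $c\in B\cap A(\xi_i)$ for some $i$. The divisibility $100^{j^2}\mid 100^{k^2}$ and the block arithmetic you describe are precisely what makes this work.

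The gap is in your ``reassembly'' step. You pick a monomial $M$ of the product and speak of ``the $B_j$-membership condition on those monomials of the product'' agreeing with $M$ outside the $a_i$-portion; but membership in $B$ is a property of a linear combination, not of individual monomials, and a decomposition $P=\sum_l u_l z_l v_l$ witnessing $P\in B$ need not match the monomial expansion of $P$ term-by-term. Different summands $u_l z_l v_l$ may have their $Z_{j_l}$-block localized in \emph{different} pieces $a_{i(l)}$, and there is no reason the summands with $i(l)=i$ should combine to something of the form $(\text{fixed prefix})\cdot(\text{element of }B\cap A(\xi_i))\cdot(\text{fixed suffix})$ with the prefix and suffix coming from the actual $a_1,\dots,a_{i-1},a_{i+1},\dots,a_n$. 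Your proposed induction on $n$ runs into the same problem: from $P'a_nx_{m_n}\in B$ you cannot conclude the dichotomy ``either $a_n\in B$ or $P'\in B$'', because the $B$-decomposition of $P'a_nx_{m_n}$ mixes the two possibilities freely.

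The paper sidesteps this by a linear-algebra trick. It fixes, for each $i$, a linear map $\varphi_i:A(\xi_i)\to A(\xi_i)$ with $\ker\varphi_i=B\cap A(\xi_i)$, and defines $\phi:A(\xi)\to A(\xi)$ on monomials by slicing into pieces of lengths $\xi_1,1,\xi_2,1,\dots,\xi_n,1$ and applying $\varphi_i$ to the $i$-th long piece. Your localization then shows $\phi(B\cap A(\xi))=0$. On the other hand $\phi(P)=\prod_i\varphi_i(a_i)x_{m_i}$, which in a free algebra is zero only if some $\varphi_i(a_i)=0$, i.e.\ some $a_i\in B$. This is really the tensor-product statement that a pure tensor $a_1\otimes\cdots\otimes a_n$ lies in $\sum_i V_1\otimes\cdots\otimes W_i\otimes\cdots\otimes V_n$ only if some $a_i\in W_i$; the map $\phi$ is the product of the quotient maps $V_i\to V_i/W_i$. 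Your reassembly is groping toward this, but without the projection it does not go through.
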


\begin{proof} Denote $\xi _{i}=100^{k^{2}}p_{i}-1$ for $i=1,2, \ldots ,n$ and  set $B = B_1 + \ldots +B_{k}$. 
 For each $i\leq n$ fix  a linear map $\varphi _{i}: A(\xi_{i})\to  A(\xi _{i})$ such that $Ker(\varphi _{i}) = B\cap  A(\xi _{i})$.
 Denote $\xi =n+\sum_{i=1}^{n}\xi _{i}$. We will define a maping $\phi  :A(\xi )\to A(\xi)$ first for monomials and then extend by linearity to all elements of $A(\xi)$ in the following way. Let $v=\prod_{i=1}^{n}v_{i}u_{i}$ with $v_{i}\in \mathcal{M}(\xi _{i})$, $u_{i}\in \mathcal{X}$, then define 
$\phi (v)=\prod_{i=1}^{n}\varphi _{i}(v_{i})u_{i}$.  
 Observe that even thought $a_{i}$ may not be monomials, by linearity we get: 
$\phi (a_1x_{m_1}a_2x_{m_2}\cdot\ldots\cdot x_{m_{n-1}}a_nx_{m_n})=\prod_{i=1}^{n}\varphi _{i}(a_{i})x_{m_i}$. 

Observe now that if $w\in  A(\xi )\cap B$, then $\phi (w)=0$. It follows because by the definition of sets $B_{1}, \ldots ,B_{k}$  any element  from $B_{1}+\ldots +B_{k}$ is a linear combination of elements of the form $cv$ or
 $ucv$ or $uc$
where $u\in A(\xi _{1}+\ldots +\xi _{i-1}+i-1)$, $c\in B(\xi _{i})$,  for some $i$,  and where $u$ and $v$ are monomials. Then $\phi (ucv)\in A(\xi _{0}+\ldots +\xi _{i-1}+i-1)\varphi _{i} (c)A$. Observe that $\varphi _{i} (c)=0$, since $c\in B$, hence $\phi (ucv)=0$ as required . 
A similar argument gives us $\phi (uc)= 0 = \phi (cv)$.

To get a contradiction, suppose that $a=a_1x_{m_1}a_2x_{m_2}\cdot\ldots\cdot x_{m_{n-1}}a_nx_{m_n}\in B$ for some $m_1, \ldots, m_{n}$.
Then
$\phi (a) = \varphi _{1}(a_1)x_{m_1}\varphi _{2}(a_2)x_{m_2}\cdot\ldots\cdot x_{m_{n-1}}\varphi _{n}(a_n)x_{m_n}$.
On the other hand $\phi (a)=0$ since $a\in B$.
It follows that $\varphi _{1}(a_1)x_{m_1}\varphi _{2}(a_2)x_{m_2}\cdot\ldots\cdot x_{m_{n-1}}\varphi _{n}(a_n)x_{m_n}=0$, and hence 
$\varphi _{i}(a_{i})=0$ for some $i$, so $a_{i}\in Ker(\varphi_{i})\subseteq  B$, as required.
\end{proof}

\begin{lemma}\label{22}
Let $m \geq 0$ and $(x_0X)^m = a_mX^m +  a_{m-1}X^{m-1} + \ldots + a_0$. If for 
$s = x_{n_1}x_{n_2}\dots x_{n_m}$ and $\kappa\in K$, $\kappa s$ is a summand of a coefficient $a_t$ for some $t \geq 0$, then $t=m-deg(s)$ and for any $i = 1, \ldots, m$, $\sum_{j=1}^i n_j\leq i-1$.  
\end{lemma}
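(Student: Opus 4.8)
The plan is to prove this by induction on $m$, tracking how the product $(x_0X)^m$ unfolds under the rule $Xa = aX + D(a)$. Write $(x_0X)^m = (x_0X)^{m-1}\cdot x_0X$, and suppose inductively that $(x_0X)^{m-1} = \sum_{t} a_t' X^t$ where each summand $\kappa' s'$ of $a_t'$ (with $s' = x_{n_1}\cdots x_{n_{m-1}}$ a monomial of length $m-1$) satisfies $t = (m-1) - \deg(s')$ and $\sum_{j=1}^i n_j \le i-1$ for all $i \le m-1$. Multiplying on the right by $x_0 X$ means computing $a_t' X^t x_0 X = a_t'\bigl(\sum_{r=0}^{t}\binom{t}{r} D^r(x_0) X^{t-r}\bigr)X = \sum_{r=0}^t \binom{t}{r} a_t' D^r(x_0) X^{t-r+1}$. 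Since $D(x_i) = x_{i+1}$, we have $D^r(x_0) = x_r$, so each new summand has the form $\kappa'\binom{t}{r} s' x_r$, a monomial $s = x_{n_1}\cdots x_{n_{m-1}} x_r$ of length $m$ sitting in the coefficient of $X^{t-r+1}$.

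The first assertion, $t = m - \deg(s)$, follows immediately: the new $X$-exponent is $(t) - r + 1$, and by induction $t = (m-1) - \deg(s') = (m-1) - (\deg(s) - r)$, so $(t) - r + 1 = (m-1) - \deg(s) + r - r + 1 = m - \deg(s)$. For the inequalities $\sum_{j=1}^i n_j \le i-1$: for $i \le m-1$ these are inherited verbatim from the inductive hypothesis (the first $m-1$ letters of $s$ are exactly the letters of $s'$), so the only new constraint to verify is the case $i = m$, namely $\sum_{j=1}^m n_j \le m-1$, i.e. $\deg(s) \le m-1$. This is where I expect the crux to lie: I need to show $r$ cannot be so large that $\deg(s) = \deg(s') + r$ exceeds $m-1$. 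The binomial coefficient $\binom{t}{r}$ is zero unless $r \le t$, and by induction $t = (m-1) - \deg(s') \le (m-1) - 0$; more sharply, $\deg(s) = \deg(s') + r \le \deg(s') + t = \deg(s') + (m-1) - \deg(s') = m-1$. So the surviving terms automatically satisfy $\deg(s) \le m-1$, giving the remaining inequality.

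The base case $m = 0$ (or $m=1$) is trivial: $(x_0X)^0 = 1$, and for $m=1$, $(x_0X)^1 = x_0 X$, whose only nonzero coefficient is $a_1 = x_0$, with $s = x_0$, $\deg(s) = 0 = m - 1$... wait, $t = 1 = m - \deg(s)$ and $n_1 = 0 \le 0$, consistent. The one point requiring a little care is bookkeeping the constant factor: summands $\kappa s$ of $a_t$ arise as sums of terms $\kappa' \binom{t'}{r} s$ over possibly several $(t', r)$ producing the same monomial $s$, but since the claim only constrains which monomials $s$ can appear (with $t$ determined by $s$), cancellation is harmless — if $\kappa s$ is a nonzero summand of $a_t$ then at least one contributing term had the stated form, and the degree/exponent relations depend only on $s$, not on $\kappa$. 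I would phrase the induction so that it bounds the \emph{support} of the coefficients rather than the coefficients themselves, which sidesteps the cancellation issue entirely.
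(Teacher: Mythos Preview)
Your proof is correct and follows essentially the same approach as the paper: induction on $m$ via the factorization $(x_0X)^m = (x_0X)^{m-1}\cdot x_0 X$ together with the binomial expansion of $X^t x_0$. The paper phrases the inductive step backward (tracing a given summand $\kappa s$ of $a_t$ to its length-$(m{-}1)$ prefix in $(x_0X)^{m-1}$) whereas you phrase it forward (bounding the support of all coefficients produced by the multiplication), but the underlying arithmetic---in particular the use of $r\le t=(m{-}1)-\deg(s')$ to get $\deg(s)\le m-1$---is identical.
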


\begin{proof}
Assume that the claim is proved for  $m-1$ and let $\kappa s$ be  a summand of a coefficient $a_t$ for some $t \geq 0$, $\kappa\in K$ and $s=  x_{n_1}x_{n_2}\dots x_{n_m}$.  
It is easy to see that it must be that for some positive integer $\kappa_1$, $\kappa_1 x_{n_1}x_{n_2}\dots x_{n_{m-1}}$ is a summand of a coefficient of $(x_0X)^{m-1}$. Thus by our assumption  for $q= x_{n_1}x_{n_2}\dots x_{n_{m-1}}$,
$\kappa_1q$ is a summand of a coefficient at $X^{m - 1 - deg(q)}$, and 
for any $i = 1, \ldots, m-1$, $\sum_{j=1}^i n_j\leq i-1$.
Based on the above $\kappa_1\cdot {{m-1-deg(q)}\choose {n_m}}x_{n_1}x_{n_2}\dots x_{n_m}$ is a summand 
of a coefficient of 
\begin{equation}\label{eq1}
(\kappa_1 x_{n_1}x_{n_2}\dots x_{n_{m-1}}X^{m - 1 - deg(q)})(x_0X),
\end{equation}
and $\kappa = \kappa_1\cdot {{m-1-deg(q)}\choose {n_m}}$.
Thus $n_m \leq m - 1 - deg(q)$, so 
$\sum_{j\leq m}n_j \leq m-1$. Moreover, using our assumption and  (\ref{eq1}) we get $t = m-1 - deg(q) + 1 - m_n = t - deg(s)$. Thus the proof is  complete.
\end{proof}

\begin{lemma}\label{nextlemma}
If $m+1=100\cdot h$  for some positive integer $h$, and $(x_0X)^{m}=\sum_{i=0}^{m}a_{i}X^{i}$ with $a_{i}\in A(100\cdot h - 1)$, then 
there is $i>{3\over 4}(m+1)$ such that $a_{i}\notin B_{1}$ and $a_{i+1}, a_{i+2}, \ldots \in B_{1}$.
\end{lemma}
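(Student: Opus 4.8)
The plan is to combine the explicit description of the coefficients $a_i$ afforded by Lemma~\ref{22} with a concrete description of $B_1\cap A(100h-1)$, and then to exhibit a single monomial lying in one of the ``high'' coefficients $a_i$ (with $i$ close to $m$) that cannot be absorbed into $B_1$. First I would sharpen Lemma~\ref{22} into a formula: from $(x_0X)^m=(x_0X)^{m-1}(x_0X)$ together with $X^jx_0=\sum_{r=0}^{j}\binom{j}{r}x_rX^{j-r}$ one obtains the recursion $a_i^{(m)}=\sum_{j}\binom{j}{i-1}a_j^{(m-1)}x_{j-i+1}$, and hence, for a monomial $s=x_{n_1}\cdots x_{n_m}$, the coefficient of $s$ in $a_i$ is the \emph{single} product $\prod_{k=1}^{m}\binom{I_k+n_k-1}{n_k}$, where $I_m=i$ and $I_{k-1}=I_k+n_k-1$. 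In particular $s$ can occur only in $a_{m-\deg(s)}$, and it occurs there with nonzero coefficient precisely when every binomial factor is nonzero in $K$.

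Next I would pin down $B_1\cap A(100h-1)$. Since $100^{1^2}=100$ and $100^{(1-1)^2}=1$, the set $Z_1$ consists of length-$99$ elements that ``see'' only the $1$st and $3$rd positions: either a monomial $s$ with $s[1]=s[3]$, or a symmetrizer $s_1+s_2$ in which $s_1,s_2$ agree off positions $1,3$ and carry there two distinct letters $x_{l_1},x_{l_2}$, $l_1>l_2>0$, in opposite orders. Hence $B_1\cap A(100h-1)$ is spanned by the ``bad-block'' monomials---those possessing some length-$99$ window starting at a position $\equiv1\pmod{100}$ whose $1$st and $3$rd letters coincide---together with such symmetrizers inserted at block-aligned positions. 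It follows that if $s\in A(100h-1)$ is a monomial which is bad-block-free and which, in each of the $h$ windows, carries $x_0$ in at least one of the two critical positions, then the coordinate functional ``coefficient of $s$'' annihilates $B_1\cap A(100h-1)$; consequently any element of $A(100h-1)$ in whose support $s$ appears with nonzero coefficient lies outside $B_1$.

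The heart of the argument is to manufacture such a monomial inside the support of some $a_i$ with $i$ close to $m$. I would take $s$ equal to $x_0$ everywhere except for one ``large'' letter $x_{e_j}$ in each window $j=0,\dots,h-1$, placed at absolute position $100j+1$ or $100j+3$---for $j=0$ the position must be $3$, since the prefix inequality of Lemma~\ref{22} forces the first letter of $s$ to be $x_0$. By the formula above, the coefficient of $s$ in $a_{m-\deg(s)}$ is a product over $j$ of a single binomial $\binom{100j+2-S_{j-1}}{e_j}$ (large letter at $100j+3$) or $\binom{100j-S_{j-1}}{e_j}$ (large letter at $100j+1$), where $S_{j-1}=e_0+\cdots+e_{j-1}$. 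Processing the windows in order, one chooses the position and $e_j$ so that this factor is nonzero in $K$: when $K$ has characteristic $\neq2$ one may always take $e_j=1$ (the two candidate ``tops'' differ by $2$, so they cannot both vanish modulo the characteristic, and the $j=0$ top is $2$), while when $K$ has characteristic $2$ one occasionally needs $e_j=2$, exploiting that of two consecutive even integers one is $\equiv2\pmod4$, so the corresponding $\binom{\cdot}{2}$ is odd. Either way $\deg(s)=\sum_j e_j\le2h$; the remaining prefix and length requirements of Lemma~\ref{22} are then routine to verify; so $s$ occurs with nonzero coefficient in $a_{m-\deg(s)}$, and $m-\deg(s)\ge100h-1-2h=98h-1>75h=\tfrac34(m+1)$. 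By the previous paragraph, $a_{m-\deg(s)}\notin B_1$.

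To conclude, let $i$ be the largest index with $a_i\notin B_1$; it exists (the previous step produces one, and $a_j=0\in B_1$ for $j>m$), it satisfies $i\ge98h-1>\tfrac34(m+1)$, and $a_{i+1},a_{i+2},\dots\in B_1$ by maximality. The step I expect to be the main obstacle is the construction of $s$: arranging \emph{all} the binomial factors to be nonzero in $K$ while simultaneously keeping $\deg(s)$ safely below the threshold $\tfrac14(m+1)=25h$. This is exactly where the two degrees of freedom---which of the positions $100j+1,\,100j+3$ carries the large letter, and the option of raising $e_j$ from $1$ to $2$---are needed, and it is also the only place where the characteristic of $K$ plays a role.
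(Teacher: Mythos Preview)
Your argument is correct and takes a genuinely different route from the paper's.

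The paper proves the lemma by induction on $h$. For the base case $h=1$ it singles out the monomial with $s[3]=x_2$ and $s[j]=x_0$ otherwise, checks that it occurs in $a_{97}$, and argues from the shape of $Z_1$ that $a_{97}\notin B_1$. For the inductive step it factors $(x_0X)^m=(x_0X)^{99}\,x_0X\,(x_0X)^{100(h-1)-1}$, picks the ``good'' coefficients $b_{i_1}$ and $e_{i_2}$ supplied by the two shorter cases, and shows that $a_{i_1+i_2+1}-b_{i_1}x_0e_{i_2}\in B_1$; the conclusion $a_{i_1+i_2+1}\notin B_1$ then comes from Lemma~\ref{important}.

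You bypass both the induction and Lemma~\ref{important}. Your closed formula for the coefficient of $x_{n_1}\cdots x_{n_m}$ in $a_i$ as $\prod_k\binom{I_{k-1}}{n_k}$ (with $I_0=0$, $I_k=k-\sum_{l\le k}n_l$) is correct, and your description of $B_1\cap A(100h-1)$ is exactly right: the type-(1) generators force equal letters at the two critical spots of some block, while the type-(2) generators force both of those letters to have positive index, so the ``coefficient-of-$s$'' functional, for an $s$ that is bad-block-free and carries $x_0$ at one critical spot per block, does annihilate $B_1$. Your block-by-block choice of $e_j\in\{1,2\}$ and of the position $100j+1$ versus $100j+3$ then makes every nontrivial binomial factor nonzero in $K$ (including in characteristic $2$, via $\binom{n}{2}$ being odd for $n\equiv 2,3\pmod 4$), and the prefix inequalities $\sum_{l\le k}n_l\le k-1$ are easily seen to hold. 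This yields $a_{m-\deg(s)}\notin B_1$ with $m-\deg(s)\ge 98h-1$, comfortably above $\tfrac34(m+1)=75h$.

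What each approach buys: the paper's induction is shorter and reuses the machinery of Lemma~\ref{important} that is needed anyway for Lemma~\ref{main}; your direct construction is more explicit, gives a sharper lower bound on $i$, and makes transparent exactly why $B_1$ cannot swallow the chosen coefficient---in effect it supplies the linear functional that the paper's base case only gestures at. The one place where your argument is genuinely more delicate than the paper's is the characteristic-dependent choice of $e_j$, which the paper avoids by never needing an explicit coefficient beyond $h=1$.
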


\begin{proof}
As a base case we consider $m = 99$ ($h=1$). 
It is not hard to see that  as a summand of
$a_{97}$ we have $s\in A(99)$ where $s$ is a monomial such that  $s[3] = x_2$  and $s[j] = x_0$ for all
$j\neq 3$. Moreover, by Lemma \ref{22}(ii) the other summands $\kappa v$ ($\kappa\in K$ and $v$ is a monomial) of $a_{97}$ satisfy $v[1] = x_0$, $v[3] \in \{x_0, x_1\}$. Thus considering the  construction of the set $Z_1\subseteq B_1$ we deduce that $a_{97}\notin B_1$. Since fact that 
$a_{99} = x_0^{99}\in B_1$ is clear, taking $i = 97$ if $a_{98}\in B_1$, and $i = 98$ if $a_{98}\notin B_1$ we finish the proof in this case.

Assume that our claim is true for all positive integers smaller than $h$ and consider
$m$ such that $m+1 = 100\cdot h$, and 
$(x_0X)^{m}=\sum_{n=0}^{m}a_{n}X^{n}$ with $a_{n}\in A(100\cdot h - 1)$. 
By assumption on $h$ and the first part of the proof, there exists a coefficient $b_{i_1}$ of $(x_0X)^{99}$ such that $i_1>{3\over 4}100$,
$b_{i_1}\notin B_1$ and $b_{i_1+1}, \ldots \in B_1$, and a coefficient $e_{i_2}$ of $(x_0X)^{100(h-1)-1}$ such that $i_2> {3\over 4}(100(h-1))$,
$e_{i_2}\notin B_1$ and $e_{i_2+1},\ldots \in B_1$.

Consider the coefficient $a_{i_1+i_2+1}$ of
$$(x_0X)^m = (x_0X)^{99}x_0X(x_0X)^{100(h-1)-1}.$$
Using above and Remark \ref{remark} it is easy to check that
$$a_{i_1 + i_2+1} = b_{i_1}x_0e_{i_2} + \sum_l\overline{b_l}D^{j_l}(x_0)\overline{e_l}$$ for some non-negative integers $l, j_l$ and $\overline{b_l}, \overline{e_l}$ such that for every $l$ either $\overline{b_l}\in B_1$ or $\overline{e_l}\in B_1$. Thus $\sum_l\overline{b_l}D^{j_l}(x_0)\overline{e_l}\in B_1$, and if $a_{i_1+ i_2+1}\in B_1$
we have $b_{i_1}x_0e_{i_2}\in B_1$. But this is impossible by Lemma \ref{important}. Thus  $a_{i_1+ i_2+1}\notin B_1$.
Moreover, $i_1+i_2 +1 >\frac{3}{4}100h$.
Taking  the biggest $i$ such that $a_i\notin B_1$ (obviously $i\geq i_1+i_2+1> \frac{3}{4}100h$) we get $a_{i+1}, \ldots \in B_1$.
Thus the claim is proved. 
\end{proof}

Recall that numbers $c_{1}, c_{2}, \ldots, c_{k+1}$ for a positive integer $k$ and constructed set $Z_{k}$  are fixed  and  $c_{1}=100^{(k-1)^2}, c_{2}=3\cdot 100^{(k-1)^2}, \ldots,\, c_{k+1}=3^k\cdot 100^{(k-1)^2}$. To simplify the notation we will from now on take
$c_0 = 0, c_{k+2} = 100^{k^2}$.

\begin{lemma}\label{important2}
 Let $a\in A(100^{k^{2}}-1)$ for some $k>0$.  Suppose that $a\in B_{1}+\ldots +B_{k}$. 
Denote $\xi _{i}=c_{i}-c_{i-1}-1$ for $i\leq k+2$.
Suppose that $\bar b$ is sum of all summands of $a$ which belong to  
$$E=A(\xi _{1})x_{c_{0}}A(\xi_{2})x_{c_{1}}A(\xi _{3})x_{c_{2}} \ldots x_{c_{k}}A(\xi _{k+2}).$$ Suppose moreover that for any permutation $\sigma $ of the elements $c_{0}, c_{1}, \ldots  , c_{k}$,  which is no the identity permutation, element $a$ has no summands 
 which belong to the set  $E^{\sigma }=A(\xi_{1})x_{\sigma (c_{0})}A(\xi _{2})x_{\sigma (c_{1})}A(\xi _{3})x_{\sigma (c_{2})} \ldots x_{\sigma (c_{k})}A(\xi _{k+2})$. Then $\bar b\in B_{1}+\ldots +B_{k-1}$.
\end{lemma}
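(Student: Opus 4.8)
The plan is to analyze the structure of $\bar b$ by peeling off the outermost "block" of the splitting determined by the distinguished positions $c_0, c_1, \dots, c_k$, and to show that once we have extracted everything genuinely coming from the constraints at position $c_k$ (i.e.\ everything using the full length $100^{k^2}$), whatever remains must already live in $B_1 + \dots + B_{k-1}$. First I would unwind the hypothesis that $a \in B_1 + \dots + B_k$ using the definition \eqref{sk}: $a$ is a linear combination of elements of the form $u c v$ with $u \in A(m \cdot 100^{j^2})$, $c \in Z_j$, $v \in A^1$, for various $j \le k$. Since $a \in A(100^{k^2} - 1)$ and the block length in $B_k$ is exactly $100^{k^2}$, no summand of $a$ can actually come from $B_k$ with a nontrivial $Z_k$-factor sitting strictly inside (there is no room for a prefix $u$ of positive length $m \cdot 100^{k^2}$ together with a $Z_k$-element of length $100^{k^2} - 1$ and something after it, unless $m = 0$). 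So the $B_k$-contribution to $a$ is precisely a linear combination of elements of $Z_k A^1$, i.e.\ of $Z_k(\xi)$ for $\xi = 100^{k^2}-1$ together with a trailing $A^1$-factor of length $0$; that is, the $B_k$-part of $a$ lies in $Z_k$ itself (as $Z_k \subseteq A(100^{k^2}-1)$ already has exactly the right length).

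Next I would use the hypothesis about the permutations $\sigma$. The set $Z_k$ is defined (both cases (1) and (2)) by coincidences among the entries $s[3^p \cdot 100^{(k-1)^2}] = s[c_{p+1}]$ at exactly the positions $c_1, \dots, c_k$ (shifted by one — I need to be careful about whether the indexing puts these at $c_1,\dots,c_{k+1}$ or $c_0, \dots, c_k$; I will reconcile this with the paper's convention $c_0 = 0$, $c_{k+2} = 100^{k^2}$, so that the relevant positions inside a length-$(100^{k^2}-1)$ monomial are $c_1 = 100^{(k-1)^2}, \dots, c_{k+1} = 3^k \cdot 100^{(k-1)^2}$, all strictly less than $100^{k^2}$). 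A monomial $s \in \mathcal{M}(100^{k^2}-1)$, decomposed along the positions $c_1, \dots, c_{k+1}$, sits in exactly one of the blocks $E^\sigma$ according to the pattern of which free generator occupies which distinguished slot. The hypothesis says $a$ has no summand in any $E^\sigma$ with $\sigma \ne \mathrm{id}$; hence the $Z_k$-part of $a$, after projecting onto $E = E^{\mathrm{id}}$, is exactly $\bar b$, and the part of $a$ NOT in $E$ (which is what makes up $a - \bar b$ together with the $E$-summands that are themselves already in $B_1 + \dots + B_{k-1}$) cannot meet $Z_k$ at all in the type-(1)/(2) way. I would make this precise by the standard device of applying a linear projection $\pi : A(100^{k^2}-1) \to A(100^{k^2}-1)$ that kills every monomial lying in some $E^\sigma$, $\sigma \ne \mathrm{id}$, and fixes $E$; since $B_1, \dots, B_{k-1}$ are built from blocks of length $100^{j^2}$ with $j \le k-1$ dividing $\xi_i$-sums compatibly, one checks $\pi(B_1 + \dots + B_{k-1}) \subseteq B_1 + \dots + B_{k-1}$, while $\pi$ annihilates the genuine $Z_k$-type summands in $E^\sigma$, $\sigma\neq\mathrm{id}$, and also — crucially — annihilates those $Z_k$-type-(1) or type-(2) summands that could have contributed to $\bar b$: a type-(1) element $\kappa s$ with $s[c_p] = s[c_q]$ and $s \in E$ forces two distinct distinguished slots of $s$ to hold the same generator, and likewise type-(2) pairs $\kappa(s_1 + s_2)$ are swaps at two distinguished slots — but inside $E$, by definition the $j$-th distinguished slot holds $x_{c_{j-1}}$ and these are pairwise distinct, so there simply are no type-(1) or type-(2) elements of $Z_k$ supported on $E$. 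Therefore $\bar b = \pi(a) = \pi(a - (\text{the }B_k\text{-part})) \in \pi(B_1 + \dots + B_{k-1}) \subseteq B_1 + \dots + B_{k-1}$.

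The main obstacle I anticipate is the bookkeeping in the middle step: verifying that the projection $\pi$ (kill monomials outside $E$) really does preserve $B_1 + \dots + B_{k-1}$, and that the $B_k$-part of $a$ is killed by $\pi$. The first requires knowing that a block-length-$100^{j^2}$ factor from $Z_j$ ($j \le k-1$) sitting inside a length-$(100^{k^2}-1)$ monomial, when that monomial lies in $E$, still has its left and right surrounding factors of the correct block-aligned lengths after $\pi$ — which holds because $\pi$ only deletes whole monomials, not parts of them, so a $B_1 + \dots + B_{k-1}$-representation survives term by term provided the distinguished positions $c_1, \dots, c_{k+1}$ are compatible with all the shorter block structures; this is where the choice of the $c_i$'s as $3^p \cdot 100^{(k-1)^2}$ and the ratio $100$ between consecutive scales does the work, and I would need to spell out that $100^{j^2} \mid c_i$ is false in general but the relevant alignment statement ("every block boundary at scale $j \le k-1$ either avoids or respects the distinguished slots") is what is actually needed and is purely arithmetic. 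The second obstacle-point — that the type-(1),(2) elements of $Z_k$ vanish under $\pi$ — is, as sketched above, essentially immediate once the indexing is pinned down, because $E$ by construction forbids the very coincidences/swaps that define $Z_k$. Once both alignment facts are in hand, the conclusion $\bar b \in B_1 + \dots + B_{k-1}$ follows formally.
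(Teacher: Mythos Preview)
Your approach via a projection $\pi$ onto $E$ is close to the paper's, and your handling of the first obstacle (that $\pi$ preserves $B_1+\dots+B_{k-1}$) is essentially correct: the distinguished positions $c_1,\dots,c_{k+1}$ are all multiples of $100^{(k-1)^2}$, hence of $100^{j^2}$ for every $j\le k-1$, so any $Z_j$-factor in a $B_j$-generator sits strictly inside a single $A(\xi_i)$-block and never straddles a distinguished slot. Since $\pi$ acts only on the distinguished slots and both monomials of a type-(2) $Z_j$ element lie in the same block, $\pi$ treats them identically and $\pi(B_{<k})\subseteq B_{<k}$ follows.

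The genuine gap is in your ``second obstacle-point,'' which you dismiss as essentially immediate. It is \emph{not} true that $\pi$ annihilates $Z_k$. For a type-(2) element $\kappa(s_1+s_2)\in Z_k$ (a swap at positions $c_{p+1},c_{q+1}$ with $p<q$), exactly one of the two monomials can lie in $E$: inside $E$ the slot $c_{i+1}$ holds $x_{c_i}$, so the later slot holds the larger index, which matches the pattern of $s_2$ (and is forbidden for $s_1$). When $p\ge 1$ so that $l_2=c_p>0$, the element $\kappa(s_1+s_2)$ with $s_2\in E$ and $s_1\in E^{\tau}$ (for $\tau$ the transposition) is a perfectly good member of $Z_k$, and your projection gives $\pi(\kappa(s_1+s_2))=\kappa s_2$, a nonzero monomial of $E$ with no reason to lie in $B_1+\dots+B_{k-1}$. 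So from $a=b_{<k}+b_k$ you only get $\bar b = \pi(a)=\pi(b_{<k})+\pi(b_k)\in B_{<k}+\pi(b_k)$, and the argument stalls.

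The paper repairs exactly this point by replacing your $\pi$ with a \emph{signed} map $\phi$: a monomial whose distinguished entries $(u_1,\dots,u_{k+1})$ form a permutation of $(x_{c_0},\dots,x_{c_k})$ is sent to the corresponding $E$-monomial multiplied by the sign of that permutation, and all other monomials are sent to $0$. Type-(1) elements of $Z_k$ then die because a repeated entry is not a permutation, and type-(2) elements die because the transposition introduces a sign, yielding $\phi(s_1+s_2)=-s_2+s_2=0$. The preservation of $B_{<k}$ goes through unchanged (same block argument as yours), and the hypothesis that $a$ has no summands in any $E^\sigma$ with $\sigma\neq\mathrm{id}$ then gives $\phi(a)=\bar b$. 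The sign is the missing idea.
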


\begin{proof} The proof follows from the fact that set $B_{k}$ acts on different elements than sets $B_{1}, \ldots , B_{k-1}$.  We  define a maping $\phi  : A(100^{k^{2}}-1 )\to A(100^{k^{2}}-1)$ first for monomials and then extend by linearity to all elements of $A(100^{k^{2}}-1)$ in the following way. Let $v=(\prod_{i=1}^{k+1}v_{i}u_{i})w$ with $v_{i}\in \mathcal{M}(\xi _{i})$, $u_{i}\in \mathcal{X}$, $w\in \mathcal{M}(\xi _{k+2})$ then define 
\begin{itemize}
\item[1.] $\phi (v)=v$ if $v\in E$,
\item[2.] $\phi (v)=(\prod_{i=1}^{k+1}v_{i}x_{c_{i-1}})w$ if $u_{1}, u_{2},\ldots , u_{k+1}$ is an even permutation of elements $x_{c_{0}}, x_{c_{1}}, \ldots , x_{c_{k}}$,
\item[3.] $\phi (v)=-(\prod_{i=1}^{k+1}v_{i}x_{c_{i-1}})w$  if $u_{1}, u_{2}, \ldots ,u_{k+1}$ is an odd permutation of elements $x_{c_{0}}, x_{c_{1}}, \ldots , x_{c_{k}}$,
\item[4.] $\phi (v)=0$ if  $u_{1}, u_{2},  \ldots , u_{k+1}$ is not a permutation of  elements $x_{c_{0}}, x_{c_{1}}, \ldots , x_{c_{k}}$.
\end{itemize}
 Recall that an even permutation is  obtained from an even number of two-element swaps. By the definition of set $Z_{k}$ it follows that $\phi (t)=0$ for every $t\in B_{k}\cap A(100^{k^{2}}-1)$ (this can be checked for all generating relations of $Z_{k}$).

Set $B = B_{1}+\ldots +B_{k-1}$.
 Observe now that if $p\in  A(100^{k^{2}}-1 )$ and $p\in B$, then $\phi (p)\in B$. It follows because by the definition of sets $B_{1}, \ldots ,B_{k-1}$  any element  from $B$ is a linear combination of elements of the form $cv$ or
$ucv$ or $uc$
 where $u\in \mathcal{M}(\xi _{1}+\ldots +\xi _{i-1}+i-1)$, $v\in \mathcal{M}$,  $c\in B\cap A(\xi _{i})$,  for some $i = 1, \ldots, k+2$.  
It follows that $\phi (p)\in B$, as required.
Consequently it follows that if $t\in (B_{1}+\ldots+B_{k-1} + B_{k})\cap A(100^{k^{2}}-1)$ then $\phi (t)\in B_{1}+\ldots +B_{k-1}$. 

By assumption $a\in B_{1}+\ldots +B_{k}$. This implies $\phi (a)\in B$. 
 Observe that $\phi (a)=\phi (\bar {b})$, as $a$ has no summands for any set $E^{\sigma }$, and $\phi (s)=0$ for any summand $s$ of $a$ which is not in $E$  or $E^{\sigma }$ for some permutation $\sigma $.
 Therefore, $\phi (\bar {b})\in B_{1}+\ldots +B_{k-1}$.
 But, by the definition of mapping $\phi $, $\phi (e)=e$ for any $e\in E$, and so  $\phi (\bar b)=\bar {b}$. Consequently,  
$\bar b\in B_{1}+\ldots +B_{k-1}$, as required.
\end{proof}

Keeping in mind that $c_0 = 0, c_{1}=100^{(k-1)^2}, c_{2}=3\cdot 100^{(k-1)^2}, \ldots,\, c_{k+1}=3^k\cdot 100^{(k-1)^2}, c_{k+2} = 100^{k^2}$  we will prove the following.

\begin{lemma}\label{main}
Let $k$ be a natural number.  If $m+1=100^{k^2}\cdot h$  for some positive integer $h$, and $(x_0X)^{m}=\sum_{j=0}^{m}a_{j}X^{j}$ with $a_{j}\in A(100^{k^2}\cdot h - 1)$ then 
there is $i>({1\over 2} + {1\over 2(k+1)})(m+1)$ such that $a_{i}\notin B_{1} + \ldots + B_{k}$ and $a_{i+1}, a_{i+2}, \ldots \in B_{1}+ \ldots + B_{k}$.
\end{lemma}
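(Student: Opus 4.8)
The plan is to run an induction on $k$, with Lemma \ref{nextlemma} serving as the base case $k=1$ (there the bound $({1\over 2}+{1\over 2(k+1)})(m+1) = {3\over 4}(m+1)$ is exactly what is proved). So assume the statement holds for $k-1$ and fix $m$ with $m+1 = 100^{k^2}h$. The overall strategy is to factor $(x_0X)^m$ into $100^{k^2}/100^{(k-1)^2}$-sized blocks, apply the inductive hypothesis to each block to locate, inside each, a distinguished non-radical coefficient sitting at a position strictly past the fraction $({1\over 2}+{1\over 2k})$ of that block's degree, and then multiply these together. The products of block-coefficients land in a coefficient of $(x_0X)^m$ whose index, being a sum of the block indices plus the $X$'s used to glue the blocks, again overshoots the required fraction of $m+1$ — here one uses the elementary fact that if each $i_j > \lambda(\text{block}_j)$ then $\sum i_j + (\text{glue}) > \lambda(m+1)$, and that $({1\over 2}+{1\over 2k})$ degrades only to $({1\over 2}+{1\over 2(k+1)})$ when the number of blocks is taken into account (this is the point of the geometric choice $c_j = 3^{j-1}100^{(k-1)^2}$ and of the $100$-fold gap between consecutive levels).

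The heart of the argument is showing that the relevant coefficient $a_i$ of $(x_0X)^m$ is \emph{not} in $B_1+\ldots+B_k$. Write $a_i$, via the block factorization and the Leibniz expansion of $D$ across the glue variables $x_0$ (as in the proof of Lemma \ref{nextlemma}), as $b^{(1)} x_{c_0} b^{(2)} x_{c_1} \cdots x_{c_k} b^{(k+1)}$ plus a sum of terms each of which has at least one factor already in $B_1+\ldots+B_{k-1}$ (hence, by Lemma \ref{important}, the whole correction term lies in $B_1+\ldots+B_{k-1}\subseteq B$). Each $b^{(j)}$ is a coefficient of a power $(x_0X)^{m_j}$ with $m_j+1$ a multiple of $100^{(k-1)^2}$, and by the inductive hypothesis $b^{(j)}\notin B_1+\ldots+B_{k-1}$. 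Now I must combine these. First, by Lemma \ref{22}(ii) the monomials occurring in $a_i$ obey $\sum_{q\le j} n_q \le j-1$, which forces the glue letters to occupy exactly the slots $c_0,\dots,c_k$ and pins down the ``shape'' $E = A(\xi_1)x_{c_0}A(\xi_2)x_{c_1}\cdots x_{c_k}A(\xi_{k+2})$ from Lemma \ref{important2}; moreover the same positional constraint rules out the permuted shapes $E^\sigma$ for $\sigma\neq\mathrm{id}$, because a nontrivial permutation of $\{c_0,\dots,c_k\}$ would place some $x_{c_j}$ in a slot whose index exceeds the partial-sum bound. Hence the hypotheses of Lemma \ref{important2} are met, and if $a_i\in B_1+\ldots+B_k$ then its $E$-part $\bar b$ lies in $B_1+\ldots+B_{k-1}$. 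But $\bar b$ is (up to the already-absorbed $B$-terms) exactly $b^{(1)}x_{c_0}\cdots x_{c_k}b^{(k+1)}$, a product of elements none of which is in $B_1+\ldots+B_{k-1}$, so Lemma \ref{important} gives a contradiction. Therefore $a_i\notin B_1+\ldots+B_k$; replacing $i$ by the largest index with this property yields $a_{i+1},a_{i+2},\ldots\in B_1+\ldots+B_k$, and the index bound is preserved since enlarging $i$ only helps.

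The main obstacle I anticipate is \emph{bookkeeping the index arithmetic} so that the fraction genuinely comes out to $({1\over 2}+{1\over 2(k+1)})$: one has to verify that the degrees $\xi_i = c_i - c_{i-1}-1$ sum correctly to $100^{k^2}-1$, that each block of length $c_i-c_{i-1}$ is itself a multiple of $100^{(k-1)^2}$ so the inductive hypothesis applies verbatim, and that the ratio lost at each inductive step telescopes. A secondary subtlety is justifying that \emph{all} cross terms from the Leibniz rule across the $k$ glue variables carry a $B_1+\ldots+B_{k-1}$ factor — this follows because applying $D^{j}$ with $j\ge 1$ to one of the blocks either kills the distinguished position (raising an index past the allowed partial sum, which by Lemma \ref{22} cannot survive as a summand) or, more simply, any block to which a derivative of positive order has been applied loses the structural feature that kept it out of $B_{k-1}$, so it falls into $B_1+\ldots+B_{k-1}$; one should state this as a small sublemma or fold it into the computation of $a_i$ exactly as was done for $k=1$ in Lemma \ref{nextlemma}. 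Apart from these, the argument is a direct lift of the $k=1$ case, with Lemma \ref{important} and Lemma \ref{important2} doing precisely the work of separating the $B_k$-layer from the lower ones.
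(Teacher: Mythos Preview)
Your outline has the right skeleton (induction on $k$, block factorisation at the positions $c_1,\ldots,c_{k+1}$, then Lemma~\ref{important2} to strip off $B_k$ and Lemma~\ref{important} to finish), but there is an internal inconsistency that is not mere bookkeeping. You want the glue letters to be $x_{c_0},x_{c_1},\ldots,x_{c_k}$ \emph{and} you want the target index to be $\sum_j i_j + (\text{glue})$. These are incompatible: to turn the $j$-th glue $x_0$ into $x_{c_{j-1}}$ you must spend $c_{j-1}$ of the preceding $X$'s, so the surviving $X$-power is not the sum of the block indices. Conversely, if you keep all block $X$'s (the direct analogue of Lemma~\ref{nextlemma}), every glue stays $x_0$, so the main term $b^{(1)}x_0b^{(2)}x_0\cdots x_0 b^{(k+2)}$ has equal letters at all the positions $c_1,\ldots,c_{k+1}$ and therefore lies in $Z_kA^1\subseteq B_k$ by condition~(1) of $Z_k$; Lemma~\ref{important2} then gives you nothing. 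The paper resolves this by absorbing \emph{all} preceding $X$'s into each glue, so the glue subscripts are $\alpha_1,\alpha_2+1,\ldots,\alpha_{k+1}+1$ (the block indices themselves, not the $c_j$), and the target coefficient sits at $t=1+\alpha_{k+2}$ --- only the \emph{last} block's index survives. The degradation from $\tfrac12+\tfrac1{2k}$ to $\tfrac12+\tfrac1{2(k+1)}$ is then not a ``number of blocks'' effect but the single inequality $\alpha_{k+2}>(\tfrac12+\tfrac1{2k})(c_{k+2}-c_{k+1})$ together with $c_{k+1}=3^k100^{(k-1)^2}\ll 100^{k^2}$. Lemma~\ref{important2} is applied with these $\alpha$-subscripts (which are distinct and increasing because $\alpha_{i+1}+1>c_i$); the exclusion of $E^\sigma$-summands uses exactly this inequality combined with the partial-sum bound of Lemma~\ref{22}.

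A second omission: the paper runs a double induction. The argument above is only the case $h=1$; for $h>1$ one factors $(x_0X)^{100^{k^2}h-1}=(x_0X)^{100^{k^2}-1}\cdot x_0X\cdot (x_0X)^{100^{k^2}(h-1)-1}$ and combines the two halves exactly as in the inductive step of Lemma~\ref{nextlemma} (here the glue may stay $x_0$, since only two blocks are involved and Lemma~\ref{important} suffices --- no appeal to Lemma~\ref{important2} is needed). Your proposal folds $h$ into the $k$-step, but the $E$-shape of Lemma~\ref{important2} is tailored to length $100^{k^2}-1$, so the $h=1$ and $h>1$ cases genuinely require different mechanisms.
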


\begin{proof} Using Lemma \ref{nextlemma} we can assume that the claim is true for all positive integers smaller than $k$ and $k > 1$.

Let $h=1$. 
Consider 
$$(x_0X)^{100^{k^2}-1}=\big(\prod_{j=1}^{k+1}[(x_0X)^{c_i - c_{i-1}-1}x_0X]\big)\cdot (x_0X)^{c_{k+2} - c_{k+1}-1}.$$
 By inductive assumption for $i= 1, \ldots, k+2$
$$(x_0X)^{c_{i}- c_{i-1} - 1}=a_{\alpha_{i}}X^{\alpha_{i} }+f_{i}(X)+g_{i}(X)$$ where 
\begin{equation}\label{eqqq1}
\alpha_{i} > ({1\over 2} + {1\over 2k})(c_{i} - c_{i-1})
\end{equation}
 $a_{\alpha_{i}}\in A$, $a_{\alpha_{i}}\notin B_{1} + \ldots +B_{k-1}$, and  
$$f_{i}(X)\in \sum_{j=0}^{\alpha_{i} -1}AX^{j},\, g_i(X)\in ((B_{1}+\ldots + B_{k-1})X^{\alpha_{i}+1})[X].$$

Thus setting $F_i = a_{\alpha_{i}}X^{\alpha_{i} }+f_{i}(X)+g_{i}(X)$ for $i=1, \ldots, k+2$ we have 
\begin{equation}\label{ax}
(x_0X)^{100^{k^2}-1} = F_1x_0XF_2x_0X\cdots\ldots\cdot x_0XF_{k+2}.
\end{equation}

Set  
\begin{equation}\label{t}
t = 1 + \alpha_{k+2}.
\end{equation}
Straightforward computation shows that 
$t >  ({1\over 2} + {1\over 2(k+1)})100^{k^2}$.
Consider the coefficient $a_t$ of $(x_0X)^{m}=\sum_{j=0}^{m}a_{j}X^{j}$ and

{\setlength\arraycolsep{1pt}
\begin{eqnarray}
b & = & a_{\alpha_1}D^{\alpha _{1}}(x_0)a_{\alpha_2}D^{\alpha_2+1}(x_{0})a_{\alpha_3}D^{\alpha_3+1}(x_{0})\cdot\ldots \cdot a_{\alpha_{k+1}}D^{\alpha_{k+1}+1}(x_{0})a_{\alpha_{k+2}} 
\nonumber\\
& = & {}  a_{\alpha_1}x_{\alpha_1}a_{\alpha_2}x_{\alpha_2+1}\cdot\ldots \cdot a_{\alpha_{k+1}}x_{\alpha_{k+1}+1}a_{\alpha_{k+2}}
\end{eqnarray}}
which is a summand of $a_t$.   

We  consider the element $\overline{b}$ which is a sum of all summands of $a_t$ which are of the form 
$$q = q_1x_{\alpha _{1}}q_2x_{\alpha_2+1}q_3x_{\alpha_3+1}\cdot\ldots\cdot q_{k+1}x_{\alpha_{k+1}+1}q_{k+2}$$ where for every $i =1, \ldots, k+2$, $q_i= D^{l_i}(w_i)$ with  $l_i$  non-negative integers, and $w_i$ being coefficients
of $a_{\alpha_i}X^{\alpha_i} + f_i(X) + g_i(X)$ (notice that always $deg(w_i) \leq deg(q_i)$). 
Observe that since $q$ is a summand of $a_t$ using  Lemma \ref{22} we have
$$deg(q) = deg(q_1) + \ldots + deg(q_{k+1}) + deg(q_{k+2}) + \alpha_1 + \ldots+ \alpha_{k+1}+k= m-t.$$
Thus by $(\ref{t})$
\begin{equation}\label{eq2}
m + 1 = (deg(q_1)+1+\alpha_1)+\ldots+(deg(q_{k+1})+1+\alpha_{k+1}) + (deg(q_{k+2})+1+\alpha_{k+2}).
\end{equation}
We will need to see  $m+1\, (= 100^{k^2} = c_{k+2})$ also in the following form
\begin{equation}\label{eq3}
m + 1= (c_1 - c_0) + (c_2 - c_1) + \ldots + (c_{k+1} - c_k) + (c_{k+2}-c_{k+1}).
\end{equation}

Observe that if for some $i$, $w_i$ is a coefficient of $f_{i}(X) \in \sum_{j=0}^{\alpha_{i} -1}AX^{j}$, or $w_i = a_{\alpha_i}$ and $l_i>0$, then 
 $deg(q_i) \geq  c_i - c_{i-1} - 1 - (\alpha_i - 1) = c_i - c_{i-1} - \alpha_i$ which implies $deg(q_i) + \alpha_i + 1 > c_i - c_{i-1}$.
The last fact together with $(\ref{eq2})$ and $(\ref{eq3})$ imply that in the described situation
there exists $j$ such that $deg(q_j) + \alpha_j + 1<c_j - c_{j-1}$. So we have $deg(w_j) + \alpha_j + 1 < c_j - c_{j-1}$.
 But then by Lemma \ref{22},   $w_j$ is a coefficient of $g_j(X)$, and  using Remark
\ref{remark} we have $q\in B_1+\ldots +B_{k-1}$.

By above consideration we get 
$\overline{b} - b\in B_1+\ldots +B_{k-1}$. 
Thus $\overline{b}\notin B_1+\ldots + B_{k-1}$. Indeed, otherwise $b\in B_1+\ldots +B_{k-1}$ and by Lemma
\ref{important} (taking $k-1$ in place of $k$), $a_{\alpha_j}\in B_1+\ldots + B_{k-1}$ for some $j=1, \ldots, k+2$, a contradiction. 
As by (\ref{eqqq1}) for $i=1, \ldots, k$,
$\alpha_{i+1} \geq ({1\over 2} + {1\over 2k})\cdot 2\cdot 3^{i-1}100^{(k-1)^2} > 3^{i-1}100^{(k-1)^2}=c_i$,
using Lemma \ref{22} we can seet that the element $\bar b$ satisfies assumptions of Lemma \ref{important2} for $a=a_{t}$. 
But then $\overline{b}\notin B_1+\ldots + B_{k-1}$ implies $a=a_{t}\notin B_1+\ldots + B_{k}$.
As  $t >({1\over 2} + {1\over 2(k+1)})100^{k^2}$ taking 
the maximal $i$ such that $a_i\notin B_1+\ldots + B_k$ and $i > ({1\over 2} + {1\over 2(k+1)})100^{k^2}$ we get $a_{i+1}, \ldots \in B_1 + \ldots + B_k$, which finishes our argument.

\medskip 
 Now we begin the second part of the proof.
Suppose that the claim is true for $k$ and all positive integers smaller then $h$.
Then for $m = 100^{k^2}h - 1$ and $(x_0X)^{m}=\sum_{i=0}^{m}a_{i}X^{i}$ we have 

{\setlength\arraycolsep{1pt}
\begin{eqnarray}
(x_0X)^m & = & a_{\alpha_1}D^{\alpha _{1}}x_0a_{\alpha_2}D^{\alpha_2+1}(x_{0})a_{\alpha_3}D^{\alpha_3+1}(x_{0})\cdot\ldots \cdot a_{\alpha_{k+1}}D^{\alpha_{k+1}+1}(x_{0})a_{\alpha_{k+2}}
\nonumber\\
& = & {}  (b_{i_1}X^{i_1}+
f_{1}(X)+g_{1}(X))x_0X(e_{i_2}X^{i_2}+f_{2}(X)+g_{2}(X))
\end{eqnarray}}
where
$i_1 >({1\over 2} + {1\over 2(k+1)})100^{k^2}$, $b_{i_1}\in A$, 
$b_{i_1}\notin B_{1}+\ldots + B_{k}$, $f_{1}(X)\in \sum_{i=0}^{i_1 -1}AX^{i}$, $g_{1}(X)\in ((B_{1}+\ldots + B_{k})X^{i_1+1})[X]$, and
$i_2 >({1\over 2} + {1\over 2(k+1)})100^{k^2}(h-1)$, $e_{i_2}\in A$, $e_{i_2}\notin B_{1}+\ldots + B_{k}$, $f_{2}(X)\in \sum_{i=0}^{i_2 -1}AX^{i}$, $g_{2}\in ((B_{1}+\ldots + B_{k})X^{i_2+1})[X]$.

Consider the coefficient $a_{i_1 + i_2 + 1}$ of $(x_0X)^m$.
We can see that $a_{i_1 + i_2 + 1} = b_{i_1}x_0e_{i_2} + b$ for some 
$b\in B_{1}+\ldots + B_{k}$. Thus if $a_{i_1 + i_2 + 1}\in B_1 + \ldots +B_k$, then $b_{i_1}x_0e_{i_2}\in B_1 + \ldots + B_k$. But this contradicts Lemma \ref{important}. Thus $a_{i_1 + i_2 + 1}\notin B_1+\ldots+ B_k$.
Notice that 
$i_1 + i_2 + 1 > ({1\over 2} + {1\over 2(k+1)})100^{k^2}h$. Thus obviously we can find the maximal $i$ such that $a_i\notin B_1+\ldots + B_k$ and $i> ({1\over 2} + {1\over 2(k+1)})100^{k^2}h$. Then also $a_{i+1}, \ldots \in B_1 + \ldots + B_k$ and our  proof is complete.
\end{proof}

\begin{theorem}
There exists a locally nilpotent ring $R$ and a derivation $D$ on $R$ such that $R[X; D]$ is not a Jacobson radical ring.
\end{theorem}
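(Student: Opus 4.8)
The plan is to take $R=A/I$, with $A$, $I$ and $D$ as constructed above, and to verify that this $R$, together with the derivation it inherits from $D$, does the job. Since each set $W(k,n)$ is closed under $D$ and $I_k$ is the ideal generated by $W(k,2\cdot 100^{k^2})$, one gets $D(I_k)\subseteq I_k$ for every $k$, hence $D(I)\subseteq I$. Thus $D$ passes to a derivation $\bar D$ on $R$, the two-sided ideal $I[X;D]$ of $A[X;D]$ is $D$-stable, and $R[X;\bar D]\cong A[X;D]/I[X;D]$; I will write $\pi\colon A[X;D]\to R[X;\bar D]$ for the quotient map. The first routine point is that $R$ is locally nilpotent: any finitely generated subalgebra of $R$ is contained in the image of the subalgebra of $A$ generated by $x_0,\dots,x_{k-1}$ for a suitable $k$, and because $W(k,2\cdot 100^{k^2},0)\subseteq I_k\subseteq I$ every product of $2\cdot 100^{k^2}$ elements of $\{x_0,\dots,x_{k-1}\}$ is zero in $R$, so that image, and hence the subalgebra, is nilpotent.

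It then suffices to show $x_0X\notin J(R[X;\bar D])$, since if $R[X;\bar D]$ were Jacobson radical then every one of its elements, in particular $x_0X$, would lie in its Jacobson radical. For this I would use the length grading. As $D$ preserves the length of monomials and $I$ is generated by length-homogeneous elements, $I$ is a homogeneous ideal of the length-graded algebra $A=\bigoplus_{m\geq1}A(m)$, so $R=\bigoplus_{m\geq1}R(m)$ is graded, and consequently $(R[X;\bar D])^1$ is $\mathbb{Z}_{\geq0}$-graded: its degree-$0$ part is $K\cdot1$, its degree-$m$ part ($m\geq1$) consists of those polynomials whose coefficients lie in $R(m)$, $X$ has degree $0$, and $x_0X$ has degree $1$.

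Now suppose, for contradiction, that $x_0X\in J(R[X;\bar D])$. Then $1-x_0X$ is invertible in $(R[X;\bar D])^1$; write its inverse as $1+g$ with $g\in R[X;\bar D]$, and let $u_m$ be the degree-$m$ homogeneous component of $1+g$, so $u_0=1$. Comparing degree-$m$ components in $(1-x_0X)(1+g)=1$ yields $u_m=(x_0X)u_{m-1}$ for all $m\geq1$, whence $u_m=(x_0X)^m$; thus the degree-$m$ component of $g$ equals $(x_0X)^m$ for every $m$. Since $g$ is a single polynomial over $R$, only finitely many of its homogeneous components are nonzero, so there is an $L_0$ with $(x_0X)^m=0$ in $R[X;\bar D]$ for all $m>L_0$. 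On the other hand, choose $k$ with $100^{k^2}-1>L_0$ and put $m=100^{k^2}-1$; writing $(x_0X)^m=\sum_{j=0}^m a_jX^j$ in $A[X;D]$, Lemma \ref{main} (applied with $h=1$) supplies an index $i$ with $a_i\notin B_1+\dots+B_k$. Here $a_i\in A(m)$, and since $I$ is homogeneous while the degree-$m$ component of $I_\ell$ vanishes whenever $2\cdot 100^{\ell^2}>m$, we get $I\cap A(m)=\sum_{\ell\leq k-1}(I_\ell\cap A(m))\subseteq I_1+\dots+I_{k-1}\subseteq B_1+\dots+B_k$ by Lemma \ref{ooo}. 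Hence $a_i\notin I$, so $\pi\bigl((x_0X)^m\bigr)=\sum_j\bar a_jX^j$ has the nonzero coefficient $\bar a_i$ at $X^i$, i.e.\ $(x_0X)^m\neq0$ in $R[X;\bar D]$ — contradicting $m>L_0$. Therefore $x_0X\notin J(R[X;\bar D])$ and $R[X;\bar D]$ is not Jacobson radical.

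The genuinely hard combinatorial work has already been carried out in Lemma \ref{main}; the part of the argument above that I expect to need the most care is the bookkeeping that converts its conclusion ``$a_i\notin B_1+\dots+B_k$'' into ``$a_i\notin I$'' — one must check that $I$ is really homogeneous for the length grading and that the generators of $I_\ell$ are too long to affect the homogeneous component of degree $100^{k^2}-1$ unless $\ell\leq k-1$ — together with the verification that the $\mathbb{Z}_{\geq0}$-grading on $(R[X;\bar D])^1$ is compatible with inverting $1-x_0X$. The remaining points (well-definedness of $\bar D$, local nilpotence of $R$, and the componentwise recursion giving $u_m=(x_0X)^m$) are routine.
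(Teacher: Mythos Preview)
Your proof is correct and follows the paper's route: take $R=A/I$, check it is locally nilpotent, use Lemma~\ref{main} to produce for each $k$ a coefficient $a_i$ of $(x_0X)^{100^{k^2}-1}$ outside $B_1+\cdots+B_k$, and then use length considerations together with Lemma~\ref{ooo} to conclude $a_i\notin I$. The one genuine difference is in the final step. The paper observes that $R[X;\bar D]$ is graded (length grading on $R$, degree~$0$ on $X$) and then invokes the theorem from \cite{Smoktunowicz} that a homogeneous quasiregular element of a graded ring must be nilpotent, applied to $x_0X$. You instead prove this nilpotence directly: assuming $1-x_0X$ invertible, you read off from the grading the recursion $u_m=(x_0X)u_{m-1}$ for the homogeneous components of the inverse, forcing $(x_0X)^m=0$ for large $m$. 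Your version is self-contained and removes the dependence on the external reference at the cost of a short extra computation; the paper's version is terser but relies on that cited result. Your bookkeeping for $a_i\notin I$ is also slightly sharper than the paper's, since you cut off at $\ell\le k-1$ (using that $I_k$ already has no elements of length $100^{k^2}-1$) rather than at $\ell\le k$.
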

\begin{proof}
As it is not hard to see that $D(I)\subseteq I$, we can consider the $K$-algebra $R  = A/I$ and the natural  derivation on $R$ induced by $D$ which we will denote also by $D$. It is easy to see that $R$ is a locally nilpotent algebra. By Lemma \ref{main}
for any positive integer $k$ there exists positive integer $i$ such that for $(x_0X)^{100^{k^2} - 1} = \sum_{j=0}^{100^{k^2} - 1}a_{j}X^{j}$,
$a_i\notin B_1 + \ldots + B_k$. Since any monomial which is a summand of an element of $B_n$ for $n>k$, has length at least $100^{(k+1)^2}$, using Lemma \ref{ooo}    we get  $a_i\notin I$.
Finally,  as $R[X; D]$ is graded by positive integers when we assign gradation $1$ to elements $x_{l}$ for any $l$, and gradation $0$ to $X$,  and in graded rings homogeneous quasiregular elements are nilpotent \cite{Smoktunowicz} we state that the ring $R[X; D]$ is not Jacobson radical.
\end{proof}
\begin{theorem} Let $K$ be a field. Then there  exists a locally nilpotent $K$-algebra $R$ and a derivation $D$ on $R$ such that $R[X; D]$ is not  a Jacobson radical ring.
\end{theorem}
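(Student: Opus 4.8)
The plan is to note that the whole construction of this section --- the free algebra $A$ over $K$, the derivation $D$ with $D(x_i)=x_{i+1}$, the sets $W(k,n)$ and $Z_k$, the ideals $I_k$ and $I=\sum_{k>0}I_k$, the linear spaces $W_k$ and $B_k$, and Lemmas \ref{1}--\ref{main} --- is already formulated over an arbitrary field, and that neither those lemmas nor the proof of the preceding theorem uses anything about $K$ except that it is a field. So I would simply take the field $K$ of the statement, form the $K$-algebra $R=A/I$, and equip it with the derivation induced by $D$; this is legitimate because $D(I)\subseteq I$, each $I_k$ being generated by the $D$-invariant set $W(k,2\cdot100^{k^2})$. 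The claim is that this $R$ already does the job.

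First I would recheck that $R$ is a locally nilpotent $K$-algebra. Any finitely generated $K$-subalgebra of $R$ is generated by the images of finitely many elements of $A$ and hence lies in the image of the subalgebra of $A$ generated by $x_0,\dots,x_{k-1}$ for some $k$; since every monomial of length $2\cdot100^{k^2}$ in $x_0,\dots,x_{k-1}$ lies in $W(k,2\cdot100^{k^2},0)\subseteq I_k\subseteq I$, that image is nilpotent, so every finitely generated $K$-subalgebra of $R$ is nilpotent.

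Then I would repeat the gradation argument of the preceding theorem. Giving each $x_l$ degree $1$ and $X$ degree $0$ turns $R[X;D]$ into an $\mathbb{N}$-graded ring, and by \cite{Smoktunowicz} a homogeneous quasi-regular element of a graded ring is nilpotent; so if $R[X;D]$ were Jacobson radical then the homogeneous degree-one element $x_0X$ would satisfy $(x_0X)^N=0$ for some $N$. Picking $k$ with $100^{k^2}-1\geq N$ and writing $(x_0X)^{100^{k^2}-1}=\sum_j a_jX^j$ with $a_j\in A(100^{k^2}-1)$, Lemma \ref{main} (with $h=1$) produces an index $i$ with $a_i\notin B_1+\dots+B_k$; by Lemma \ref{ooo} we have $I_j\subseteq B_j$ for every $j$, while for $j>k$ each monomial occurring in $I_j$ has length at least $100^{(k+1)^2}>100^{k^2}-1=\deg(a_i)$, so $a_i$ cannot lie in $I$, its image in $R$ is nonzero, and therefore $(x_0X)^{100^{k^2}-1}\neq0$ in $R[X;D]$, contradicting $(x_0X)^N=0$. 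Hence $R[X;D]$ is not Jacobson radical, and since $R$ is a $K$-algebra we are done.

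I do not expect a genuine obstacle: the only thing to be careful about is the bookkeeping, namely confirming that every lemma cited above and every line of the preceding theorem's proof goes through verbatim with $K$ arbitrary. In that sense the statement is just the field-explicit reformulation of the theorem already proved.
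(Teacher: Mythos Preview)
Your proposal is correct and is exactly the paper's own argument: the paper's proof of this theorem is the single sentence ``The proof is the same as the proof of the previous Theorem,'' and you have simply unfolded that reference, checking that the construction and all the cited lemmas are field-independent and then rerunning the local-nilpotence and gradation arguments verbatim. One small notational slip: where you write ``$\deg(a_i)=100^{k^2}-1$'' you mean the \emph{length} (the paper reserves $\deg$ for the sum of subscripts), but the intended inequality is clear and the argument stands.
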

\begin{proof}
 The proof is the same as the proof of the previous Theorem.
\end{proof}

\section*{Acknowledgements}
 The authors would like to give thanks to Andr\'e Leroy and Jerzy Matczuk for helpful suggestions concerning Section 2 related to Ferrero's question during their research visits to Lens. The authors are also very grateful to Ivan Shestakov for useful comments on the preprint of this paper. The research  of Agata Smoktunowicz was funded by ERC grant 320974.

\def\auth#1{{\sc #1}}
\def\titlart#1{{\it #1.}}
\def\titlj#1{{\rm #1\mbox{}}}
\def\vol#1{{\bf #1}:}
\def\no#1{{\rm no.\ #1,}}
\def\date#1{{\rm (#1).}}

\end{document}